\documentclass{amsart}

\usepackage{amsmath}
\usepackage{amssymb}
\usepackage[all]{xy}
\usepackage{picinpar}
\usepackage{palatino}

\def\p{{\mathcal P}_{\infty}}
\def\f{{\mathbb F}_q^{\ast}}
\def\F{{\mathbb F}_q}
\def\pK{\mathfrak p}
\def\fin{\hfill\qed\bigskip}

\def\*#1{#1^*}
\def\lra{\longrightarrow}
\def\A{\Delta\times\prod_{\pK\nmid\infty}U_{\pK}}
\def\B{\prod_{\pK|\infty} K_{\pK}^*\times\prod_{\pK\nmid\infty}U_{\pK}}
\def\simbolo#1#2{\big(#1,#2\big)_{\pK}}
\def\rec#1#2{\frac{\big(#2,k_{\pK}\big(\sqrt[n]{#1}\big)/k_{\pK}\big)
\big(\sqrt[n]{#1}\big)}{\sqrt[n]{#1}}}

\newcommand{\xbinom}{\genfrac(){.5pt}0}
\newcommand{\xxbinom}{\genfrac[]{.5pt}0}

\newcommand{\Gal}{\operatorname{Gal}}
\newcommand{\N}{\operatorname{N}}

\newcounter{bean}

\def\las{\begin{list}
	{{\rm {(\arabic{bean})}}}{\usecounter{bean}
\setlength{\labelwidth}{0.8in}
\setlength{\labelsep}{0.3cm}
\setlength{\leftmargin}{1cm}}}

\numberwithin{equation}{section}
\newtheorem{theorem}{Theorem}[section]
\newtheorem{proposition}[theorem]{Proposition}
\newtheorem{lemma}[theorem]{Lemma}
\newtheorem{remark}[theorem]{Remark}
\newtheorem{definition}[theorem]{Definition}
\newtheorem{corollary}[theorem]{Corollary}

\title[Extended genus field of cyclic Kummer extensions of
rational function fields]
{Extended genus field of cyclic Kummer extensions of
rational function fields}
 
\author[O. Curiel]{Edgar Omar Curiel--Anaya}
\address{Departamento de Control Autom\'atico\\
Centro de Investigaci\'on y de Estudios Avanzados del I.P.N.}
\email{edgaro78@hotmail.com}

\author[M. Maldonado]{Myriam Rosal\'ia Maldonado--Ram\'irez}
\address{Departamento de Matem\'aticas\\
Escuela Superior de F\'isica y Matem\'aticas del Instituto Polit\'ecnico Nacional}
\email{mrmaldonador@ipn.mx}

\author[M. Rzedowski]{Martha Rzedowski--Calder\'on}
\address{Departamento de Control Autom\'atico\\
Centro de Investigaci\'on y de Estudios Avanzados del I.P.N.}
\email{mrzedowski@ctrl.cinvestav.mx}

\subjclass[2010]{Primary 11R58; Secondary 11R29}

\keywords{Global function fields, extended genus fields, 
cyclic Kummer extensions}

\date{August 30th., 2021}

\begin{document}

\begin{abstract}

For a cyclic Kummer extension $K$ of a rational function field $k$ is 
considered, via class field theory, the extended Hilbert class field $K_H^+$ 
of $K$ and the corresponding extended genus field $K_g^+$ of $K$ over $k$, 
along the lines of the definitions of R. Clement for such extensions of prime 
degree. We obtain $K_g^+$ explicitly. Also, we use cohomology 
to determine the number of ambiguous classes and obtain a reciprocity 
law for $K/k$. Finally, we present a necessary and sufficient condition for 
a prime of $K$ to decompose fully in $K_g^+$.

\end{abstract}

\maketitle

\section{Introduction}\label{S1}

For a number field $K$, one of the most important arithmetic objets
attached to $K$ is its class group. This group is isomorphic
to the Galois group of the extension $K_H/K$,  where $K_H$ denotes
the maximal  unramified abelian extension of $K$. The field $K_H$
is the {\em Hilbert class field of $K$} (HCF). We have that  $K_H/K$ is a 
finite extension and also that $K_H$ is the abelian extension
of $K$ such that the primes of $K$ that are fully ramified in $K_H$
are precisely the non-zero principal ideals
of $K$. One variant of the HCF is the {\em extended} or
{\em narrow} {\em Hilbert class field of $K$}, denoted by $K_H^+$.
The field $K_H^+$ is the maximal abelian extension of $K$
unramified at the finite primes. We have that $K_H^+/K$ is a finite
extension, that $K_H\subseteq K_H^+$ and also that  $K_H^+$ is 
the abelian extension of $K$ where a prime of $K$ is fully decomposed 
precisely when it is a principal ideal generated by a totally 
positive element, that is, an element such that all its real conjugates are 
positive. 

In order to study the
class group of $K$, but also interesting by itself, it is considered an intermediate field $K\subseteq
K_g \subseteq K_H$, called the {\em genus field of $K$} (relative
to ${\mathbb Q}$). The field $K_g$ is, by definition,  the composite
of $K$ and the maximal
abelian extension of ${\mathbb Q}$ contained in $K_{H}$. That is,
$K_g=Kk^*$, where $\*k$ is the maximal abelian extension of 
${\mathbb Q}$ contained in $K_{H}$. Similarly, it is considered the
{\em extended} or {\em narrow} {\em genus field of $K$} (relative
to ${\mathbb Q}$) $K_g^+$, as the composite of $K$ and the
maximal abelian extension of ${\mathbb Q}$ contained in
$K_H^+$. These definitions are due to A. Fr\"ohlich (\cite{Fro59a,
Fro59b}). For  a number field $K$, the fields $K_H$, $K_H^+$,
$K_g$ and $K_g^+$ are defined without any ambiguity and all of
them are finite extensions of $K$. In particular, when $K/k$ is
an abelian extension, $K_g$ (resp. $K_g^+$) is the maximal
abelian extension of $k$ contained in $K_H$ (resp. $K_H^+$).

When we study global function fields and we want to consider
genus fields and/or extended genus fields, the situation is different
from the number field case since the extensions of constants of
any global function field $K$ are unramified so that the maximal
unramified abelian extension of $K$ is of infinite degree over $K$. 
That is, if we consider the straight analogue of the Hilbert class field as 
the maximal unramified abelian extension of $K$ we have to
deal with infinite extensions. 

There have been a good number
of alternatives to define a Hilbert class field that is a {\em finite} 
extension of a global function field $K$. One of them is to define 
the Hilbert class field of $K$ as the maximal {\em geometric} abelian 
extension of $K$,
that is, the maximal unramified abelian extension of $K$ with the
same field of constants as $K$. It turns out that there are $h_K$
such extensions, where $h_K$ denotes the class number of $K$,
that is, the cardinality of the zero degree divisor class group of $K$ 
which is a finite group. This definition has the issue that $K_H$ is 
not unique but there are $h_K$ different choices. 

To avoid infinite extensions and lack of uniqueness of $K_H$, we have 
to deal with extensions of constants. Since every prime in $K$ is 
eventually inert in an extension of constants, the most accepted way to
define $K_H$ is first to fix a non-empty finite set $S$ of primes
of $K$ and then consider the maximal unramified abelian extension of $K$ 
where the primes of $S$ decompose fully. This such field is denoted by 
$K_{H,S}$ and it is a finite extension of $K$. The Galois group of $K_{H,S}
/K$ is isomorphic to the ideal class group of the Dedekind
ring ${\mathcal O}_S:=\{x\in K\mid v_{\pK}(x)\geq 0\text{\ for
all $\pK\notin S$}\}$. This ideal class group is a finite group.
B. Angl\`es and J.-F. Jaulent \cite{AnJa2000} have given class field theory
definitions of Hilbert class field and extended Hilbert class field
that work for any global field.

R. Clement \cite{Cle92} offered another definition of extended Hilbert class 
field for a cyclic Kummer extension $K$ of $k:={\mathbb F}_q(T)$, the
rational function field, of prime degree $l$ (necessarily  $l|q-1$) and 
consequently another definition of extended genus field $K_g^+$ of $K$ 
(relative to $k$). As far as we know, she was the first one to consider the 
concept of extended genus field for global function fields.

Since the introduction of the concept of {\em genus} by C. F. Gauss,
in the study of quadratic forms and its translation to number fields
by D. Hilbert, 
the concept has been studied by several authors. H. Hasse 
\cite{Has51} was the first to give a definition of genus field by means
of class field theory. Hasse gave his definition for quadratic number
fields. The concept was generalized by H. Leopoldt in \cite{Leo53}
to finite abelian extensions of the field of rational numbers ${
\mathbb Q}$. As a consequence of the work of Hasse, the Galois
group of $K_H^+/K$, where $K$ is a quadratic extension
of ${\mathbb Q}$, is isomorphic to $I_K/P_{K^+}$, where $I_K$
is the group of fractional ideals of $K$ and $P_{K^+}$ is the
subgroup of principal ideals generated by a totally positive element
of $K$. Since $K$ is a quadratic extension of ${\mathbb Q}$, to be
a totally positive element of $K$ is equivalent to have that its norm in
${\mathbb Q}$ is a square of a real number. This concept was 
brought to the case of a cyclic extension $K/k$ of prime degree $l$
with $l|q-1$ by Clement. She defined $K_H^+$ as the class field of
$K$ corresponding to the subgroup $\A$ of the id\`ele group $J_K$
of $K$, where $\Delta:=\big\{(x_{\pK})_{\pK|\infty}\in \prod_{\pK|\infty}
\*{K_{\pK}}\mid \prod_{\N_{K_{\pK}/k_{\infty}}}x_{\pK}\in k_{\infty}^{*l}
\big\}$ and where $\infty$ denotes the infinite prime of $k$, 
that is, the pole of $T$ in $k$ . This definition only works for cyclic
Kummer extensions of $k$ of prime degree.

The aim of this paper is to confirm that the definition of 
$K_H^+$ given by Clement can be extended to general cyclic 
Kummer extensions $K$ of $k$ and  to obtain explicitly the extended 
genus field of a general cyclic Kummer extension of $k$. 
We use cohomology theory to determine the number of ambiguous 
classes. Finally, we obtain a reciprocity law for $K/k$ and present a 
necessary and sufficient condition for 
a prime of $K$ to decompose fully in $K_g^+$. 
We use techniques
similar to the ones used by Clement.

\section{Cyclic Kummer extensions of $k$}\label{S2}

For any global field $E$, $J_E$ denotes the id\`ele group of $E$.
For a place $\pK$ of $E$, $E_{\pK}$ denotes the completion of 
$E$ at $\pK$ and $U_{\pK}$  the group of local units of $E_{\pK}$.
Let $k:=\F(T)$ be the rational function field over the finite field $\F$, $R_T:=
\F[T]$ and $R_T^+:=\{P\in R_T\mid P \text{\ is monic and irreducible}\}$.
The infinite prime $\infty=\p$ of $k$ is the pole of $T$ in $k$. Finally, 
for any $m\in{\mathbb N}$, $C_m$ denotes the cyclic group of order $m$.

Let $n\in{\mathbb N}$ be a natural number dividing $q-1$: $n|q-1$. Let
$K/k$ be a cyclic Kummer extension of degree $n$. Therefore, $K=k\big(
\sqrt[n]{D}\big)$ with $D=\gamma P_1^{\alpha_1}\cdots P_r^{\alpha_r}\in
R_T$, $\gamma\in \f$, $P_1,\ldots,P_r\in R_T^+$ and $1\leq\alpha_i\leq n-1$
for $1\leq i\leq r$. The ramified finite primes are $P_1,\ldots,P_r$. Let
$e_i$ denote the ramification index of $P_i$ in $K/k$, $1\leq i\leq r$. Denote 
by $e_{\infty}$ and $f_{\infty}$ the ramification index
and the inertia degree of any prime $\pK$ in $K$ above $\p$. 

Define
\begin{gather*}
\Delta:=\big\{(x_{\pK})_{\pK|\infty}\mid \prod_{\pK|\infty}
\N_{\*{K_{\pK}}/\*{k_{\infty}}}(x_{\pK})\in k^{*n}_{\infty}\big\}
\subseteq J_K,
\end{gather*}

\begin{gather*}
J_K^+:=\{\vec\alpha\in J_K\mid (\alpha_{\pK})_{\pK|\infty}\in \Delta\}
\intertext{and}
\begin{align*}
K^+:&=\*K\cap J_K^+=\{(x, \ldots,x \ldots)\mid x\in\*K, (x)_{\pK|\infty}\in\Delta\}\\
&=\{x\in\*K\mid \N_{K/k}(x)\in k_{\infty}^{*n}\}.
\end{align*}
\end{gather*}

\begin{lemma}\label{L2.2}
Let $n\in{\mathbb N}$ be a divisor of $q-1$. Then $\frac{k_{\infty}^*}
{k_{\infty}^{*n}}\cong C_n\times C_n$.
\end{lemma}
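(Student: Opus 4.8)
The plan is to determine the structure of $k_\infty^*/k_\infty^{*n}$ by first understanding $k_\infty$. We have $k = \mathbb{F}_q(T)$ and $\infty$ is the pole of $T$, so the completion $k_\infty$ is $\mathbb{F}_q((1/T))$, the field of formal Laurent series in the uniformizer $\pi = 1/T$. First I would write any element $x \in k_\infty^*$ in the standard form $x = \pi^{v} u$ where $v = v_\infty(x) \in \mathbb{Z}$ and $u \in U_\infty$ is a unit; moreover $u = \zeta \cdot u_1$ where $\zeta \in \mathbb{F}_q^*$ (the Teichmüller/constant part) and $u_1 \in U_\infty^{(1)}$ is a $1$-unit, i.e., $u_1 \equiv 1 \pmod{\pi}$. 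This gives a decomposition $k_\infty^* \cong \pi^{\mathbb{Z}} \times \mathbb{F}_q^* \times U_\infty^{(1)}$ as topological groups.

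Next I would compute the $n$-th power quotient of each factor. For $\pi^{\mathbb{Z}} \cong \mathbb{Z}$, the quotient $\mathbb{Z}/n\mathbb{Z} \cong C_n$. For $\mathbb{F}_q^*$, which is cyclic of order $q-1$, since $n \mid q-1$ the quotient $\mathbb{F}_q^*/(\mathbb{F}_q^*)^n \cong C_n$. The key remaining claim is that the group of $1$-units $U_\infty^{(1)}$ is $n$-divisible, i.e., $U_\infty^{(1)} = (U_\infty^{(1)})^n$, so it contributes nothing to the quotient. The main obstacle — though it is standard — is verifying this divisibility: I would argue that $n$ is invertible in $\mathbb{F}_q$ (since $n \mid q-1$ forces $\gcd(n,q)=1$, as $q-1$ is coprime to $q$), hence the binomial-series / Hensel's lemma argument applies to the polynomial $X^n - u_1$ for any $1$-unit $u_1$: its reduction mod $\pi$ is $X^n - 1$, which has the simple root $X = 1$ in $\mathbb{F}_q$ (simple because $nX^{n-1}$ does not vanish at $1$), so Hensel lifts it to an $n$-th root in $U_\infty^{(1)}$.

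Putting these together, $k_\infty^*/k_\infty^{*n} \cong C_n \times C_n \times 1 = C_n \times C_n$, which is the claim. I would also remark that this matches the general local-field formula $|k_\infty^*/k_\infty^{*n}| = n^2 \cdot |n|_\infty^{-1} = n^2$ since $|n|_\infty = 1$ (as $n$ is a constant, a unit at $\infty$); this serves as a sanity check on the cardinality, while the explicit decomposition above pins down the isomorphism type. The only mildly delicate point is making the identification $k_\infty \cong \mathbb{F}_q((1/T))$ explicit and noting that the residue field at $\infty$ is exactly $\mathbb{F}_q$, so the Teichmüller lift identifies the torsion prime-to-$p$ part of $U_\infty$ with $\mathbb{F}_q^*$ cleanly; everything else is a routine computation with Laurent series.
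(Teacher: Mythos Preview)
Your proof is correct and follows essentially the same approach as the paper's: both use the decomposition $k_\infty^* \cong \pi^{\mathbb Z}\times \mathbb F_q^*\times U_\infty^{(1)}$, the fact that $n\mid q-1$ to handle the $\mathbb F_q^*$ factor, and the $n$-divisibility of the one-units (via $\gcd(n,p)=1$) to kill the $U_\infty^{(1)}$ contribution. The paper's proof is a one-sentence sketch of exactly these three ingredients, while you have spelled out the Hensel argument and added the $|k_\infty^*/k_\infty^{*n}|=n^2|n|_\infty^{-1}$ sanity check; conceptually there is no difference.
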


\begin{proof}
It follows from the group structure of $\*{k_{\infty}}$,
the fact that $n|q-1=|\f|$ and, since $n$ is relatively prime to the
characteristic of $k$, that $\big(U_{\infty}^{(1)}\big)^n=U_{\infty}^{(1)}$, 
where $U_{\infty}^{(1)}$ are the one units of $\*{k_{\infty}}$.
\end{proof}

\begin{lemma}\label{L2.1}
We have 
\las
\item $J_k=\*k\big(\*{k_{\infty}}\times \prod_{P\in R_T^+} U_P\big)$.
\item $\*K J_K^+=J_K$.
\end{list}
\end{lemma}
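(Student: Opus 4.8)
For part (1), the plan is to use the fact that $k=\F(T)$ has class number one. Recall that $J_k/\*k$ is canonically isomorphic to the id\`ele class group, and the open subgroup $\*{k_{\infty}}\times\prod_{P\in R_T^+}U_P$ of $J_k$ corresponds, under class field theory / the theory of $S$-class groups with $S=\{\infty\}$, to the $S$-ideal class group, which here is the ideal class group of the Dedekind domain $R_T=\F[T]$. Since $R_T$ is a PID, this quotient is trivial, i.e. $J_k=\*k\,\bigl(\*{k_{\infty}}\times\prod_{P\in R_T^+}U_P\bigr)$. Concretely, one argues directly: given $\vec x=(x_{\pK})\in J_k$, for each finite prime $P$ write $x_P=P^{v_P(x_P)}u_P$ with $u_P\in U_P$ (using that $P$ generates the maximal ideal), set $a:=\prod_P P^{v_P(x_P)}\in\*k$ (a finite product since $\vec x$ is an id\`ele), and check that $\vec x\,a^{-1}$ lies in $\*{k_{\infty}}\times\prod_P U_P$; this uses only that every nonzero element of $R_T$ factors uniquely into monic irreducibles times a constant.

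For part (2), the goal is $\*K J_K^+=J_K$, equivalently $J_K/J_K^+$ is generated by the image of $\*K$; since $J_K/J_K^+$ is a quotient of $J_K$ by a subgroup containing all $U_{\pK}$ for $\pK\nmid\infty$ and all of $\prod_{\pK\nmid\infty}$-components, the obstruction lives only at the infinite places of $K$. The plan is to show every id\`ele is congruent modulo $\*K$ to one whose infinite components satisfy the norm condition defining $\Delta$. First I would reduce to the infinite part: using part (1) applied to $k$ together with the norm map, or more directly by clearing denominators at finite primes using that the class group of $K$ is finite and can be killed after multiplying by a suitable element of $\*K$ — here I would invoke that the relevant quotient $J_K/\bigl(\*K\cdot\prod_{\pK\mid\infty}\*{K_{\pK}}\times\prod_{\pK\nmid\infty}U_{\pK}\bigr)$ is trivial because it is the ideal class group of the integral closure of $R_T$ in $K$ modulo the primes above $\infty$, and adjusting by $\*K$. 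Then the question becomes: is the map $\prod_{\pK\mid\infty}\*{K_{\pK}}\to k_{\infty}^*/k_{\infty}^{*n}$ given by the product of local norms, composed with nothing, such that its image together with $\N_{K/k}(\*K)$... — more cleanly, I want that for any $(x_{\pK})_{\pK\mid\infty}$ there is $\beta\in\*K$ with $(\beta^{-1}x_{\pK})_{\pK\mid\infty}\in\Delta$, i.e. $\prod_{\pK\mid\infty}\N_{\*{K_{\pK}}/\*{k_{\infty}}}(x_{\pK})\equiv \N_{K/k}(\beta)\pmod{k_{\infty}^{*n}}$.

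The main obstacle is therefore the surjectivity at infinity: showing that the image of the global norm $\N_{K/k}\colon\*K\to k_{\infty}^*$, read modulo $k_{\infty}^{*n}$, hits everything that $\prod_{\pK\mid\infty}\N_{\*{K_{\pK}}/\*{k_{\infty}}}$ produces. The clean way is to use weak approximation in $K$ at the primes above $\infty$ together with the surjectivity of the local norm $\N_{K_{\pK}/k_{\infty}}$ onto a subgroup of index $e_{\infty}f_{\infty}$ of $\*{k_{\infty}}$, and Lemma \ref{L2.2} which pins down $k_{\infty}^*/k_{\infty}^{*n}\cong C_n\times C_n$ so that index computations modulo $n$-th powers are explicit; since $e_{\infty}f_{\infty}\mid n$ and the ramified/inert behaviour at $\infty$ is controlled by $D$, one checks the cokernel is killed. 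Concretely I expect the argument to run: pick $\beta\in\*K$ approximating a prescribed unit at one chosen $\pK_0\mid\infty$ and close to $1$ at the others, so that $\N_{K/k}(\beta)$ realizes, modulo $k_{\infty}^{*n}$, an arbitrary prescribed class in the image of $\N_{\*{K_{\pK_0}}/\*{k_{\infty}}}$; combined with the fact that the product of the local norm images over all $\pK\mid\infty$ already exhausts $k_{\infty}^*/k_{\infty}^{*n}$ (again by Lemma \ref{L2.2} and the local norm index), this forces $J_K=\*K J_K^+$. I would write this out by first handling the finite places (routine, via part (1) and the class group), then devoting the bulk of the argument to this norm-surjectivity statement at $\infty$.
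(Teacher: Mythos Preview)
Your argument for (1) is correct and coincides with the paper's.

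For (2), your plan is needlessly complicated and rests on a misreading and on a false subsidiary claim. First, note that $J_K^+=\{\vec\alpha\in J_K:(\alpha_{\pK})_{\pK\mid\infty}\in\Delta\}$ imposes a condition \emph{only} at the primes above $\infty$; there is no constraint at finite places, so your ``first reduce to the infinite part via the class group'' step is unnecessary. Second, your claim that ``the product of the local norm images over all $\pK\mid\infty$ already exhausts $k_\infty^*/k_\infty^{*n}$'' is false: by Remark~\ref{R2.4} all the local norm groups $\N_{K_{\pK}/k_\infty}(K_{\pK}^*)$ coincide, so their product is just one of them, which has index $e_\infty f_\infty$ in $k_\infty^*$ (and the same index modulo $k_\infty^{*n}$, cf.\ Corollary~\ref{C2.5}). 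Luckily you do not actually need this claim, since the target $\prod_{\pK\mid\infty}\N(x_{\pK})$ already lies in that norm group; so your approach is salvageable, but the write-up would have to be reorganised.

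The paper's argument is much shorter and avoids norm--index bookkeeping entirely. Given $\vec\alpha\in J_K$, use the approximation theorem to pick $x\in K^*$ with $v_{\pK}(\alpha_{\pK}-x)>v_{\pK}(\alpha_{\pK})$ for every $\pK\mid\infty$. Then $x^{-1}\alpha_{\pK}\in U_{\pK}^{(1)}$ for each such $\pK$; since $\gcd(n,p)=1$ one has $U_{\pK}^{(1)}=\bigl(U_{\pK}^{(1)}\bigr)^n$, so each $x^{-1}\alpha_{\pK}$ is a local $n$-th power and hence $\N_{K_{\pK}/k_\infty}(x^{-1}\alpha_{\pK})\in k_\infty^{*n}$. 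Thus $x^{-1}\vec\alpha\in J_K^+$ directly. Your instinct to use weak approximation at the infinite places is exactly right; applied at \emph{all} of them simultaneously, together with the fact that one-units are $n$-th powers, it gives the result in one line.
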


\begin{proof}
(1) Let $\vec \beta=(\beta_{\infty},\beta_P)_{P\in R_T^+}\in J_k$. Let
$Q_1,\ldots, Q_t\in R_T^+$ be the finite primes such that $v_{Q_i}
(\beta_{Q_i})=c_i\neq 0$. We have that $v_P(\beta_P)=0$ for all
$P\in R_T^+\setminus\{Q_1,\ldots,Q_t\}$. Define $f\in \*k$ as
$f=\prod_{i=1}^t Q_i^{c_i}$. Then $f^{-1}\vec \beta\in
\big(\*{k_{\infty}}\times \prod_{P\in R_T^+} U_P\big)$ and the result
follows.

(2) Let $\vec\alpha\in J_K$. By the approximation theorem, there exists
$x\in\*K$ such that $v_{\pK}(\alpha_{\pK}-x)> v_{\pK}(\alpha_{\pK})$ 
for all $\pK|\infty$. Then $x^{-1}\alpha_{\pK}\in U_K^{(1)}=\big(U_K^{
(1)}\big)^n$ and $\N_{K_{\pK}/k_{\infty}}(x^{-1}\alpha_{\pK})$ $\in k_{\infty}^{
*n}$. Hence $x^{-1}\vec\alpha\in J_K^+$.
\end{proof}

\begin{lemma}\label{L2.3}
The map $\N\colon\frac{\prod_{\pK|\infty}\*{K_{\pK}}}{\Delta}\lra \frac{\*{
k_{\infty}}}{k^{*n}_{\infty}}$ induced by the norm, is injective. Furthermore,
the sequence
\[
1\lra \frac{\prod_{\pK|\infty}\*{K_{\pK}}}{\Delta}\xrightarrow{\phantom{xx}\N
\phantom{xx}}\frac{\*{
k_{\infty}}}{k^{*n}_{\infty}}\xrightarrow{\phantom{xx}\pi
\phantom{xx}} \frac{\*{k_{\infty}}}{\N\big(
\prod_{\pK|\infty}\*{K_{\pK}}\big)}\lra 1,
\]
is exact, where $\N\big(\prod_{\pK|\infty}\*{K_{\pK}}\big)=\{\prod_{\pK|\infty}
\N_{\*{K_{\pK}}/\*{k_{\infty}}}(x_{\pK})\in \*{k_{\infty}}\mid x_{\pK}\in \*{K_{\pK}}\}$.
\end{lemma}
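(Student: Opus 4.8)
The plan is to derive everything from the first isomorphism theorem, once we know that $\N\big(\prod_{\pK|\infty}K_{\pK}^*\big)$ is a subgroup of $k_{\infty}^*$ containing $k_{\infty}^{*n}$.

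First I would consider the group homomorphism
\[
\Phi\colon \prod_{\pK|\infty}K_{\pK}^*\lra \frac{k_{\infty}^*}{k_{\infty}^{*n}},\qquad
(x_{\pK})_{\pK|\infty}\longmapsto\Big(\prod_{\pK|\infty}\N_{K_{\pK}/k_{\infty}}(x_{\pK})\Big)k_{\infty}^{*n}.
\]
By the definition of $\Delta$ one has $\ker\Phi=\Delta$, so $\Phi$ induces an injective homomorphism on $\prod_{\pK|\infty}K_{\pK}^*/\Delta$, and this induced map is exactly the map $\N$ of the statement. In particular $\N$ is injective, which is the claimed injectivity and also the exactness of the displayed sequence at its left-hand end; moreover its image is $\N\big(\prod_{\pK|\infty}K_{\pK}^*\big)k_{\infty}^{*n}/k_{\infty}^{*n}$.

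Next I would observe that $\N\big(\prod_{\pK|\infty}K_{\pK}^*\big)$ is the image of the homomorphism $(x_{\pK})_{\pK|\infty}\mapsto\prod_{\pK|\infty}\N_{K_{\pK}/k_{\infty}}(x_{\pK})$, hence a subgroup of $k_{\infty}^*$, and that it contains $k_{\infty}^{*n}$. For the latter, recall that $K/k$ is cyclic, so all primes $\pK|\infty$ are conjugate and each local degree is $[K_{\pK}:k_{\infty}]=e_{\infty}f_{\infty}=:m$, a divisor of $n=[K:k]$. Fix one prime $\pK_0|\infty$; for $y\in k_{\infty}^*$, let $\vec z=(z_{\pK})_{\pK|\infty}$ be the element of $\prod_{\pK|\infty}K_{\pK}^*$ with $\pK_0$-component $y^{n/m}$ (an element of $k_{\infty}^*\subseteq K_{\pK_0}^*$) and all other components $1$. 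Since $\N_{K_{\pK_0}/k_{\infty}}(y^{n/m})=(y^{n/m})^{m}=y^{n}$, we obtain $y^{n}=\prod_{\pK|\infty}\N_{K_{\pK}/k_{\infty}}(z_{\pK})\in\N\big(\prod_{\pK|\infty}K_{\pK}^*\big)$. Thus $k_{\infty}^{*n}\subseteq\N\big(\prod_{\pK|\infty}K_{\pK}^*\big)$, so the natural projection $\pi\colon k_{\infty}^*/k_{\infty}^{*n}\to k_{\infty}^*/\N\big(\prod_{\pK|\infty}K_{\pK}^*\big)$ is well defined and surjective; this is exactness at the right-hand end.

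It remains to identify $\operatorname{im}\N$ with $\ker\pi$. By the containment just established, $\operatorname{im}\N=\N\big(\prod_{\pK|\infty}K_{\pK}^*\big)k_{\infty}^{*n}/k_{\infty}^{*n}=\N\big(\prod_{\pK|\infty}K_{\pK}^*\big)/k_{\infty}^{*n}$, and this subgroup of $k_{\infty}^*/k_{\infty}^{*n}$ is precisely $\ker\pi$; hence the sequence is exact at the middle term, completing the proof. The only step that is not pure formalism is the divisibility $e_{\infty}f_{\infty}\mid n$ that places $k_{\infty}^{*n}$ inside the norm group — without it the map $\pi$ does not even make sense — and this is immediate from $e_{\infty}f_{\infty}g_{\infty}=n$, where $g_{\infty}$ is the number of primes of $K$ above $\infty$.
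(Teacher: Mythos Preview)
Your proof is correct and in fact more careful than the paper's own argument, which consists of the single sentence ``Follows from the definition of $\Delta$.'' Both approaches are really the same: injectivity of $\N$ is literally the definition of $\Delta$ as the kernel of the composite of the norm with reduction mod $k_{\infty}^{*n}$, and the rest is the first isomorphism theorem. The one point you make explicit that the paper leaves implicit is the containment $k_{\infty}^{*n}\subseteq \N\big(\prod_{\pK|\infty}K_{\pK}^*\big)$, without which the projection $\pi$ is not even well defined; your argument via $e_{\infty}f_{\infty}\mid n$ is the natural one and avoids appealing to Remark~\ref{R2.4}, which in the paper comes only after this lemma.
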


\begin{proof}
Follows from the definition of $\Delta$. 
\end{proof}

\begin{remark}\label{R2.4}{\rm{
For any finite Galois extension $E/F$ of global function fields,
we have that if ${\mathcal P}$ is prime in $F$ and $\pK_1$ and
$\pK_2$ are two primes in $E$ above ${\mathcal P}$, then
$\N_{E_{\pK_1}/F_{\mathcal P}} (\*{E_{\pK_1}})=
\N_{E_{\pK_2}/F_{\mathcal P}} (\*{E_{\pK_2}})$.
}}
\end{remark}

\begin{corollary}\label{C2.5}
We have  $\big[\prod_{\pK|\infty}\*{K_{\pK}}:\Delta\big]=\frac{n^2}{e_{\infty}
f_{\infty}}$.
\end{corollary}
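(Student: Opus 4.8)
The plan is to extract the index directly from the short exact sequence of Lemma~\ref{L2.3}, using that all three groups in it are finite. Since $\*{k_{\infty}}/k_{\infty}^{*n}$ is finite of order $n^2$ by Lemma~\ref{L2.2}, its subgroup $\prod_{\pK|\infty}\*{K_{\pK}}/\Delta$ and its quotient $\*{k_{\infty}}/\N\big(\prod_{\pK|\infty}\*{K_{\pK}}\big)$ are finite as well, and multiplicativity of orders in the exact sequence yields
\[
\Big[\prod_{\pK|\infty}\*{K_{\pK}}:\Delta\Big]=\frac{\big[\*{k_{\infty}}:k_{\infty}^{*n}\big]}{\big[\*{k_{\infty}}:\N\big(\prod_{\pK|\infty}\*{K_{\pK}}\big)\big]}=\frac{n^2}{\big[\*{k_{\infty}}:\N\big(\prod_{\pK|\infty}\*{K_{\pK}}\big)\big]}.
\]
So it remains to identify the denominator with $e_{\infty}f_{\infty}$.

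The first step for that is to observe that the norm group $\N\big(\prod_{\pK|\infty}\*{K_{\pK}}\big)$ is in fact a single local norm group. By definition it consists of all products $\prod_{\pK|\infty}\N_{\*{K_{\pK}}/\*{k_{\infty}}}(x_{\pK})$; by Remark~\ref{R2.4} each factor lies in one fixed group $N_0:=\N_{K_{\pK_0}/k_{\infty}}(\*{K_{\pK_0}})$, for any chosen $\pK_0\mid\infty$, and conversely every element of $N_0$ occurs as such a factor, so, $N_0$ being a subgroup, the set of all these products is exactly $N_0$. Hence $\N\big(\prod_{\pK|\infty}\*{K_{\pK}}\big)=N_0$.

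The second step is local class field theory. Since $K/k$ is cyclic, the decomposition group $\Gal(K_{\pK_0}/k_{\infty})$ is abelian, so $K_{\pK_0}/k_{\infty}$ is a finite abelian extension of local fields of degree $e_{\infty}f_{\infty}$; therefore $\big[\*{k_{\infty}}:N_0\big]=[K_{\pK_0}:k_{\infty}]=e_{\infty}f_{\infty}$. Substituting this into the displayed formula gives $\big[\prod_{\pK|\infty}\*{K_{\pK}}:\Delta\big]=n^2/(e_{\infty}f_{\infty})$, as claimed. There is no real obstacle here; the only things to be checked carefully are that the product of copies of $N_0$ collapses to $N_0$ (immediate from Remark~\ref{R2.4}) and that local class field theory applies (immediate from the cyclicity of $K/k$).
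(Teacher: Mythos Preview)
Your proof is correct and follows essentially the same route as the paper's: both extract the index from the exact sequence of Lemma~\ref{L2.3}, use Remark~\ref{R2.4} to reduce the product norm group to a single local norm group, and then invoke local class field theory together with Lemma~\ref{L2.2}. You add a couple of explicit justifications (collapsing the product to $N_0$, noting that cyclicity ensures the local extension is abelian) that the paper leaves implicit, but the argument is the same.
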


\begin{proof}
From Lemma \ref{L2.3} we obtain that
\[
\Big[\prod_{\pK|\infty}\*{K_{\pK}}:\Delta\Big]=\frac{\big[\*{k_{\infty}}:
k_{\infty}^{*n}\big]}{\big[\*{k_{\infty}}:\N\big(
\prod_{\pK|\infty}\*{K_{\pK}}\big)\big]},
\]
and from Remark \ref{R2.4} we have that $\N\big(
\prod_{\pK|\infty}\*{K_{\pK}}\big)=\N_{\*{K_{\pK}}/\*{k_{\infty}}}
(\*{K_{\pK}})$ for any $\pK|\p$.
From the fundamental result of local field theory, we have that 
$\big[\*{k_{\infty}}:\N_{\*{K_{\pK}}/\*{k_{\infty}}} (\*{K_{\pK}})\big]$
$=
e_{\infty}f_{\infty}$. The result now follows from Lemma \ref{L2.2}.
\end{proof}

\begin{remark}\label{R2.6}{\rm{
We have 
\[
\frac{\prod_{\pK|\infty}\*{K_{\pK}}}{\Delta}\cong \frac{\prod_{\pK|\infty}\*{K_{\pK}}
\times \prod_{P\in R_T^+}U_P}{\Delta\times \prod_{P\in R_T^+}U_P}.
\]
}}
\end{remark}

\begin{lemma}\label{L2.7}
We have the following equalities
\begin{multline*}
\frac{\Big[\prod_{\pK|\infty}\*{K_{\pK}}:\Delta\Big]}{\Big[\*K\cap \Big(
\prod_{\pK|\infty}\*{K_{\pK}} \times \prod_{\pK\nmid\infty}U_{\pK}\Big):
\*K\cap \Big(\Delta \times \prod_{\pK\nmid\infty}U_{\pK}\Big)\Big]}\\
=\frac{\Big[\prod_{\pK|\infty}\*{K_{\pK}}\times \prod_{\pK\nmid\infty}U_{\pK}:
\Delta\times \prod_{\pK\nmid\infty}U_{\pK}\Big]}{\Big[\*K\cap \Big(
\prod_{\pK|\infty}\*{K_{\pK}} \times \prod_{\pK\nmid\infty}U_{\pK}\Big):
\*K\cap \Big(\Delta \times \prod_{\pK\nmid\infty}U_{\pK}\Big)\Big]}\\
=\Big[\*K \Big(\prod_{\pK|\infty}\*{K_{\pK}} \times \prod_{\pK\nmid\infty}U_{\pK}\Big)/\*K:
\*K \Big(\Delta \times \prod_{\pK\nmid\infty}U_{\pK}\Big)/\*K\Big]\\
=\Big[\*K \Big(\prod_{\pK|\infty}\*{K_{\pK}} \times \prod_{\pK\nmid\infty}U_{\pK}\Big):
\*K \Big(\Delta \times \prod_{\pK\nmid\infty}U_{\pK}\Big)\Big].
\end{multline*}
\end{lemma}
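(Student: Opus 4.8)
The plan is to deduce the entire chain from two standard facts about the abelian group $J_K$: the third isomorphism theorem and Dedekind's modular law. For brevity write
\[
G=\prod_{\pK|\infty}\*{K_{\pK}}\times\prod_{\pK\nmid\infty}U_{\pK}
\qquad\text{and}\qquad
H=\Delta\times\prod_{\pK\nmid\infty}U_{\pK},
\]
so that $H\subseteq G\subseteq J_K$ and $\*K\subseteq J_K$. The inclusion $H\subseteq G$ holds because $\Delta\subseteq\prod_{\pK|\infty}\*{K_{\pK}}$, and it is precisely this containment that drives everything below.

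The equality of the first two displayed quotients is immediate: exactly as in Remark~\ref{R2.6}, projection onto the components above $\infty$ induces an isomorphism $G/H\cong\big(\prod_{\pK|\infty}\*{K_{\pK}}\big)/\Delta$, so the two numerators coincide (and by Corollary~\ref{C2.5} both equal $n^2/(e_{\infty}f_{\infty})$, hence are finite). At the other end, the equality of the last two displayed expressions is the third isomorphism theorem applied to the tower $\*K\subseteq\*KH\subseteq\*KG$, which gives $[\*KG/\*K:\*KH/\*K]=[\*KG:\*KH]$.

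Hence the only substantial point is the middle equality $[\*KG:\*KH]=[G:H]\big/[\*K\cap G:\*K\cap H]$. To establish it I would consider the homomorphism $\psi\colon G\to\*KG/\*KH$ obtained by composing the inclusion $G\hookrightarrow\*KG$ with the quotient map. It is surjective, since $\*K\subseteq\*KH$ implies that every coset of $\*KH$ in $\*KG$ has a representative lying in $G$. Its kernel is $G\cap\*KH$, and because $H\subseteq G$ Dedekind's modular law yields $G\cap\*KH=H\,(G\cap\*K)=H\,(\*K\cap G)$. Therefore $[\*KG:\*KH]=[G:H\,(\*K\cap G)]$.

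Finally I would expand this last index. Since $H\subseteq H\,(\*K\cap G)\subseteq G$, one has $[G:H\,(\*K\cap G)]=[G:H]\big/[H\,(\*K\cap G):H]$, and by the second isomorphism theorem together with the identity $(\*K\cap G)\cap H=\*K\cap H$ (valid because $H\subseteq G$), $[H\,(\*K\cap G):H]=[\*K\cap G:(\*K\cap G)\cap H]=[\*K\cap G:\*K\cap H]$. Substituting this back proves the middle equality, and combined with the equality of numerators and the equality $[\*KG/\*K:\*KH/\*K]=[\*KG:\*KH]$ established above, it shows that all four quantities coincide. No step is a genuine obstacle: the only things requiring care are the surjectivity of $\psi$ and the computation of $\ker\psi$ via the modular law — both resting on the single inclusion $H\subseteq G$ — and checking that the relevant indices are finite so that the quotients are meaningful, which follows from Corollary~\ref{C2.5} because $(\*K\cap G)/(\*K\cap H)$ embeds into the finite group $G/H$.
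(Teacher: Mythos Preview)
Your argument is correct and follows essentially the same route as the paper: Remark~\ref{R2.6} for the first equality, the third isomorphism theorem for the last, and a modular-law/second-isomorphism computation for the middle one. The paper packages the middle step as the single fact $\frac{B\cap C}{A\cap C}\cong\frac{CA\cap B}{A}$ for subgroups $A\subseteq B$ and $C$ of an abelian group, which is exactly what your surjection $\psi$ and kernel computation unwind; your explicit check of finiteness via the embedding into $G/H$ is a welcome clarification.
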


\begin{proof}
The first equality follows from Remark \ref{R2.6}. The second equality
is a consequence of the fact that for any finite subgroups $A,B, C$ of an abelian group
$X$ with $A\subseteq B$, we have $\frac{B\cap C}{A\cap C}\cong\frac{CA\cap B}{A}$. 
The last equality is a consequence of the third isomorphism theorem.
\end{proof}

Let ${\mathcal O}_K$ be the integral closure of $R_T$ in $K$. Let $U_K$ be
the group of units of ${\mathcal O}_K$: $U_K=\*{{\mathcal O}_K}$. Set $U_K^+:=
\{\alpha\in U_K\mid \N_{K/k}(\alpha)\in k_{\infty}^{*n}\}=
\{\alpha\in U_K\mid \N_{K/k}(\alpha)\in {\mathbb F}_q^{*n}\}=U_K\cap K^+$.

\begin{lemma}\label{L2.8}
We have
\[
\frac{U_K}{U_K^+}\cong \frac{\*K\cap \Big(
\prod_{\pK|\infty}\*{K_{\pK}} \times \prod_{\pK\nmid\infty}U_{\pK}\Big)}{
\*K\cap \Big(\Delta \times \prod_{\pK\nmid\infty}U_{\pK}\Big)}.
\]
\end{lemma}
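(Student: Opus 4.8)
The plan is to show directly that the numerator and the denominator of the right-hand side, as subgroups of $\*K$ under the diagonal embedding of $\*K$ into $J_K$, are exactly $U_K$ and $U_K^+$ respectively; then the two quotients literally coincide, which in particular gives the asserted isomorphism. So there is no real structural content here beyond unwinding the definitions, but two small points need to be pinned down: the identification of the finite places of $K$ with the primes of $\mathcal O_K$, and the decomposition of a global norm into a product of local norms at the places above a fixed place.

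First I would identify the numerator. An element $x\in\*K$, embedded diagonally in $J_K$, lies in $\prod_{\pK|\infty}\*{K_{\pK}}\times\prod_{\pK\nmid\infty}U_{\pK}$ precisely when $x\in U_{\pK}$, i.e. $v_{\pK}(x)=0$, for every prime $\pK$ of $K$ not lying above $\infty$. Since the primes $\pK\nmid\infty$ are exactly the nonzero prime ideals of the Dedekind domain $\mathcal O_K$ (the integral closure of $R_T$ in $K$), the condition $v_{\pK}(x)=0$ for all such $\pK$ says that both $x$ and $x^{-1}$ lie in $\mathcal O_K$, that is $x\in\*{\mathcal O_K}=U_K$; conversely every $x\in U_K$ satisfies it and of course lies in $\*{K_{\pK}}$ for $\pK\mid\infty$. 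Hence $\*K\cap\big(\prod_{\pK|\infty}\*{K_{\pK}}\times\prod_{\pK\nmid\infty}U_{\pK}\big)=U_K$.

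Next I would identify the denominator. By the same reasoning, $x\in\*K$ lies in $\Delta\times\prod_{\pK\nmid\infty}U_{\pK}$ if and only if $x\in U_K$ and, in addition, $(x)_{\pK|\infty}\in\Delta$, i.e. $\prod_{\pK|\infty}\N_{\*{K_{\pK}}/\*{k_{\infty}}}(x)\in k_{\infty}^{*n}$. At this point I would invoke the standard fact that the global norm decomposes into the product of the local norms over the places above $\infty$, namely $\N_{K/k}(x)=\prod_{\pK|\infty}\N_{\*{K_{\pK}}/\*{k_{\infty}}}(x)$ inside $\*{k_{\infty}}$; thus the extra condition is just $\N_{K/k}(x)\in k_{\infty}^{*n}$, and together with $x\in U_K$ this describes exactly $U_K^+$. (Optionally, I would also record here that for $x\in U_K$ one has $\N_{K/k}(x)\in\*{R_T}\cap k=\f$, and that $\f\cap k_{\infty}^{*n}=\F^{*n}$ because $\big(U_{\infty}^{(1)}\big)^n=U_{\infty}^{(1)}$ by the proof of Lemma \ref{L2.2}; this reconciles the two descriptions of $U_K^+$ given just before the statement.) Combining the two identifications, the right-hand side equals $U_K/U_K^+$ on the nose, finishing the proof. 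The only mildly delicate step is the global-to-local norm decomposition, but this is standard and poses no genuine obstacle.
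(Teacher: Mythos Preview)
Your argument is correct and is essentially the same as the paper's: the paper simply declares that the diagonal embedding $\varphi\colon U_K\to\big(\*K\cap(\prod_{\pK|\infty}\*{K_{\pK}}\times\prod_{\pK\nmid\infty}U_{\pK})\big)\big/\big(\*K\cap(\Delta\times\prod_{\pK\nmid\infty}U_{\pK})\big)$ is an epimorphism with kernel $U_K^+$, and your two identifications (numerator $=U_K$, denominator $=U_K^+$) are precisely what one must check to justify those two assertions. Your version is just a more explicit unwinding of the paper's one-line proof.
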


\begin{proof}
The natural map 
\begin{eqnarray*}
\varphi\colon U_K&\lra&
\frac{\*K\cap \Big(
\prod_{\pK|\infty}\*{K_{\pK}} \times \prod_{\pK\nmid\infty}U_{\pK}\Big)}{
\*K\cap \Big(\Delta \times \prod_{\pK\nmid\infty}U_{\pK}\Big)}\\
\alpha&\mapsto&(\alpha,\ldots,\alpha,\ldots)\bmod 
\Big(\*K\cap \Big(\Delta \times \prod_{\pK\nmid\infty}U_{\pK}\Big)\Big),
\end{eqnarray*}
is a group epimorphism with $\ker \varphi =U_K^+$.
\end{proof}

\begin{lemma}\label{L2.9}
We have $[U_K:U_K^+]\mid n$.
\end{lemma}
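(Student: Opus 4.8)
The plan is to exhibit $[U_K:U_K^+]$ as the order of a subgroup of the cyclic group $\F^*/\F^{*n}\cong C_n$, which has order exactly $n$ since $n\mid q-1$. By the very definition of $U_K^+$, namely $U_K^+=\{\alpha\in U_K\mid \N_{K/k}(\alpha)\in{\mathbb F}_q^{*n}\}$, the norm map $\N_{K/k}$ induces a well-defined group homomorphism
\[
\overline{\N}\colon U_K\lra \f/\f^{\,n},\qquad \alpha\longmapsto \N_{K/k}(\alpha)\bmod\f^{\,n},
\]
whose kernel is precisely $U_K^+$. Here I first need the elementary fact that $\N_{K/k}(U_K)\subseteq\F^*$: an element of $U_K$ is a unit of the integral closure ${\mathcal O}_K$ of $R_T$ in $K$, so its norm is a unit of $R_T=\F[T]$, and the units of $\F[T]$ are exactly $\F^*$. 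Consequently $\overline\N$ induces an injection $U_K/U_K^+\hookrightarrow\f/\f^{\,n}$, and since $|\f/\f^{\,n}|=\gcd(n,q-1)=n$, we conclude $[U_K:U_K^+]\mid n$.

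I would then note (either as the main argument or as a cross-check) that the same conclusion follows from Lemma \ref{L2.8} together with Lemma \ref{L2.7} and Corollary \ref{C2.5}: Lemma \ref{L2.8} identifies $U_K/U_K^+$ with the index appearing in the denominator of Lemma \ref{L2.7}, which divides $\big[\prod_{\pK|\infty}\*{K_{\pK}}:\Delta\big]=n^2/(e_\infty f_\infty)$ by Corollary \ref{C2.5}; however, $n^2/(e_\infty f_\infty)$ need not divide $n$ in general, so this route gives only a weaker divisibility and the clean statement really comes from the direct norm argument above. For this reason I expect the genuinely load-bearing observation to be the containment $\N_{K/k}(U_K)\subseteq\F^*$ and the fact that $K^+$ can be described using $\F^{*n}$ rather than $k_\infty^{*n}$ — this is exactly the equality of the two descriptions of $U_K^+$ asserted just before Lemma \ref{L2.8}, which rests on $k_\infty^{*n}\cap\F^*=\F^{*n}$ (valid because $n\mid q-1$, so $\F^*$ meets no nontrivial $n$-th-power coset coming from the uniformizer or the one-units).

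The only mild obstacle is making sure the norm argument is self-contained: one must verify that $\N_{K/k}\colon K^*\to k^*$ restricts to a map $U_K\to U_{R_T}=\F^*$, which follows because ${\mathcal O}_K$ is integral over $R_T$ and the norm of a unit is a unit in the base Dedekind domain. Granting that, the homomorphism $\overline\N$ and its kernel computation are immediate, and the lemma follows with no further work; the remark comparing with the idèle-theoretic index can be included to reconcile with the surrounding development.
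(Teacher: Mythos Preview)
Your proof is correct and follows essentially the same approach as the paper: define the norm map $U_K\to \f/{\mathbb F}_q^{*n}$, observe its kernel is $U_K^+$, and conclude that $U_K/U_K^+$ embeds in the cyclic group $\f/{\mathbb F}_q^{*n}\cong C_n$. Your write-up is in fact slightly more explicit (you justify $\N_{K/k}(U_K)\subseteq\f$), and your extra paragraph about the id\`ele-theoretic cross-check is correct but unnecessary for the lemma, as you yourself note.
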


\begin{proof}
Let $\rho\colon U_K\colon \lra \N_{K/k}(U_K)/{\mathbb F}_q^{*n}$ be given
by $\rho(\alpha)=\N_{K/k}(\alpha)\bmod {\mathbb F}_q^{*n}$. Then $\ker\rho=U_K^+$.
It follows that $U_K/U_K^+$ is a subgroup of $\f/{\mathbb F}_q^{*n}\cong C_n$.
\end{proof}

\begin{remark}\label{R2.10}{\rm{
In Lemma \ref{L2.9} we may have $[U_K:U_K^+]<n$. For instance, if $\p$ is
totally inert inert in $K/k$, then $U_K=\f$ and $U_K=U_K^+$.
}}
\end{remark}

\section{Extended Hilbert class field and extended genus field}\label{S3}

Let $K/k$ be a cyclic Kummer extension of degree $n$. We will define
the extended Hilbert class field of $K$ by means of an open subgroup
of finite index in $J_K$. To do this, first we prove the following proposition which
is the generalization of the corresponding one in Clement's paper. We present
the proof for the sake of completeness.

\begin{proposition}\label{P3.1}
The index of $\*K\big(\A\big)$ in the id\`ele group $J_K$ is finite.
\end{proposition}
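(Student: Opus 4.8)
The plan is to show that $J_K / \*K\big(\A\big)$ is finite by a two-step reduction, mirroring the standard computation of the idele-class group but keeping track of the defect coming from $\Delta$. First I would recall that the full idele class group $C_K = J_K/\*K$ contains the compact subgroup $J_K^0/\*K$ (norm-one ideles mod $\*K$), and that $J_K/\*K$ is, up to a free $\mathbb Z$-factor coming from the degree map, isomorphic to a finite group times $\widehat{\mathbb Z}$-type compact pieces; equivalently, the quotient $J_K / \big(\*K\cdot \prod_{\pK}U_\pK'\big)$ by the unit ideles and a lift of the degree generator is the divisor class group of degree zero, which is finite. So the strategy is to compare $\*K(\A)$ with a subgroup of this shape and bound the index of one inside the other.

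Concretely, I would factor the index as
\[
\big[J_K : \*K(\A)\big]
=\big[J_K : \*K\,T\big]\cdot\big[\*K\,T : \*K(\A)\big],
\]
where $T := \*K\big(\prod_{\pK|\infty}\*{K_{\pK}}\times\prod_{\pK\nmid\infty}U_\pK\big)$. The first factor $\big[J_K:\*K\,T\big]$ is finite because $T$ already contains the full local groups at the infinite primes and all finite units; modulo $\*K$ this quotient is a quotient of the degree-zero divisor class group $\mathrm{Cl}^0(K)$ (one still needs the primes of $K$ to generate, but only finitely many prime-power divisors are missed), hence finite — indeed it is a quotient of the $S$-class group for $S$ the set of infinite primes. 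The second factor is exactly the index computed abstractly in Lemma~\ref{L2.7}: it equals
\[
\Big[\*K\big(\textstyle\prod_{\pK|\infty}\*{K_{\pK}}\times\prod_{\pK\nmid\infty}U_\pK\big):\*K\big(\Delta\times\prod_{\pK\nmid\infty}U_\pK\big)\Big]
=\frac{\big[\prod_{\pK|\infty}\*{K_{\pK}}:\Delta\big]}{[U_K:U_K^+]},
\]
using Lemma~\ref{L2.8} to identify the denominator. By Corollary~\ref{C2.5} the numerator is $n^2/(e_\infty f_\infty)$, which is finite, and by Lemma~\ref{L2.9} the denominator divides $n$; so this factor is a finite number (in fact a divisor of $n^2/(e_\infty f_\infty)$, and one sees it divides $n$). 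Multiplying the two finite factors gives finiteness of $\big[J_K:\*K(\A)\big]$.

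The main obstacle is the first factor: one must argue cleanly that $\big[J_K:\*K\,T\big]$ is finite, i.e. that allowing arbitrary local behaviour at the (finitely many) infinite primes and full units elsewhere, together with $\*K$, leaves only finite index. The honest way is to invoke that $J_K/\big(\*K\,T\big)$ is a quotient of the group of divisor classes of $K$ supported away from $\infty$, which is the $\mathcal O_S$-ideal class group with $S=\{\pK\mid\infty\}$ — a finite group by the function-field analogue of finiteness of the class number. Everything after that is bookkeeping: the remaining index is controlled purely by the local norm indices at $\infty$ via Lemmas~\ref{L2.2}, \ref{L2.3}, Corollary~\ref{C2.5}, and Lemmas~\ref{L2.7}, \ref{L2.8}, \ref{L2.9}, all of which are already established above. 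Hence $\*K(\A)$ is an open subgroup of finite index in $J_K$, as claimed.
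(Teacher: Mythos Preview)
Your proposal is correct and follows essentially the same two-step decomposition as the paper: first bound $[J_K:\*K(\B)]$ by identifying it with the class group of ${\mathcal O}_K$ (your $S$-class group with $S$ the infinite primes), then bound $[\*K(\B):\*K(\A)]$ via Lemmas~\ref{L2.7}, \ref{L2.8}, Corollary~\ref{C2.5}, and Lemma~\ref{L2.9}. The only cosmetic issue is that your $T$ already contains $\*K$, so writing $\*K\,T$ is redundant; and the parenthetical claim that the second factor ``divides $n$'' is extraneous to the finiteness argument.
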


\begin{proof}
We have that $\*K\big(\A\big)\subseteq \*K\big(\B\big)$. On the one hand we
have that $J_K/\Big(\*K\big(\B\big)\Big)\cong Cl({\mathcal O}_K)$, 
the ideal class group of ${\mathcal O}_K$, which is a finite group. 

On the other hand we have
\begin{multline*}
\Big[\*K\Big(\B\Big):\*K\Big(\A\Big)\Big]\\
=\frac{\Big[\prod_{\pK|\infty}\*{K_{\pK}}:\Delta\Big]}{\Big[\*K\cap\big(\B\big):
\*K\cap\big(\A\big)\Big]}
=\frac{\Big[\prod_{\pK|\infty}\*{K_{\pK}}:\Delta\Big]}{\big[U_K:U_K^+\big]}.
\end{multline*}
The result follows from Corollary \ref{C2.5} and Lemma \ref{L2.9}.
\end{proof}

\begin{remark}\label{R3.2}{\rm{
The group $\Delta$ is the inverse image of $k_{\infty}^{*n}$ under
the norm map, which is a continuous function. Hence the
subgroup $\*K\Big(\A\Big)$ is an open subgroup of $J_K$ of finite
index.
}}
\end{remark}

\begin{definition}\label{D3.3}{\rm{
We define the {\em extended Hilbert class field $K_H^+$} of $K$ as the class
field associated to the id\`ele subgroup $\*K\Big(\A\Big)$ of $J_K$.}}
\end{definition}
\begin{remark}\label{R3.3}{\rm{
We have that $K_H^+/K$ is a finite Galois extension,
\[
\Gal(K_H^+/K)\cong \frac{J_K}{\*K\Big(\A\Big)}
\]
and also that $K_H^+/K$ is unramified at every finite place $\pK$ of $K$.
}}
\end{remark}

\begin{proposition}\label{P3.3'}
We have
\[
\frac{J_K}{\*K\Big(\A\Big)}\cong \frac{J_K^+}{K^+\Big(\A\Big)}\cong
\frac{I_K}{P_K^+},
\]
where $I_K$ is the group of non-zero fractional ideals of ${\mathcal O}_K$,
$P_K$ the subgroup of principal ideals of $I_K$ and $P_K^+$ the subgroup of 
$P_K$ of fractional ideals $(\beta)$ such that $\beta \in K^+$.
\end{proposition}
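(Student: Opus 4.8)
The plan is to obtain both isomorphisms from the standard isomorphism theorems together with the results already established, above all Lemma \ref{L2.1}(2) and the standard idelic description of the ideal class group recalled in the proof of Proposition \ref{P3.1}.

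For the first isomorphism, note that $\A\subseteq J_K^+$ and $\*K\big(\A\big)=\*K\cdot\A$, so the restriction to $J_K^+$ of the canonical projection $\pi\colon J_K\to J_K/\*K\big(\A\big)$ is a homomorphism. It is surjective: by Lemma \ref{L2.1}(2) we have $\*K J_K^+=J_K$, hence $\pi\big(J_K^+\big)=J_K/\*K\big(\A\big)$. Its kernel is $J_K^+\cap\*K\big(\A\big)$, and the one point requiring care is the identity $J_K^+\cap\*K\big(\A\big)=K^+\big(\A\big)$. The inclusion $\supseteq$ is immediate since $K^+=\*K\cap J_K^+$ and $\A\subseteq J_K^+$. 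Conversely, write an element of $J_K^+\cap\*K\big(\A\big)$ as $x\vec u$ with $x\in\*K$, $\vec u\in\A$; since $x\vec u$ and $\vec u$ both lie in the group $J_K^+$, so does $x=(x\vec u)\vec u^{-1}$, whence $x\in\*K\cap J_K^+=K^+$ and $x\vec u\in K^+\big(\A\big)$. The first isomorphism theorem now yields $J_K^+/K^+\big(\A\big)\cong J_K/\*K\big(\A\big)$.

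For the second isomorphism, I would imitate the classical idelic computation of the class group. Let $\Phi\colon J_K^+\to I_K$ send $\vec\alpha$ to the fractional ideal $\prod_{\pK\nmid\infty}\pK^{\,v_{\pK}(\alpha_{\pK})}$ of $\mathcal O_K$. This is a homomorphism, and it is surjective because for any fractional ideal one can choose an idele with the prescribed valuations at the finite primes and trivial components at $\infty$, which belongs to $\Delta$ since $\N(1)=1\in k_\infty^{*n}$. Its kernel consists of the $\vec\alpha\in J_K^+$ with $v_{\pK}(\alpha_{\pK})=0$ for every finite $\pK$, that is, $\ker\Phi=\Delta\times\prod_{\pK\nmid\infty}U_{\pK}=\A$. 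For $x\in K^+\subseteq\*K$ one has $\Phi(x)=(x)\mathcal O_K$, so $\Phi\big(K^+\big)=P_K^+$ by the very definition of $P_K^+$, and therefore $\Phi\big(K^+(\A)\big)=P_K^+$. Finally, $\Phi^{-1}\big(P_K^+\big)=K^+\big(\A\big)$: if $\vec\alpha\in J_K^+$ with $\Phi(\vec\alpha)=(\beta)$, $\beta\in K^+$, then $\vec\alpha\beta^{-1}\in J_K^+$ lies in $\ker\Phi=\A$, so $\vec\alpha\in K^+\big(\A\big)$; the reverse inclusion is clear. Hence $\Phi$ induces the desired isomorphism $J_K^+/K^+\big(\A\big)\cong I_K/P_K^+$.

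I do not anticipate a genuine obstacle: everything reduces to careful bookkeeping with the definitions of $J_K^+$, $K^+$ and $\A$. The only delicate points are the two kernel/preimage identifications above, and both rest on the single fact that $J_K^+$ is a subgroup of $J_K$ containing $\A$ and $K^+$. Morally, the second isomorphism is just the idelic translation of the narrow ideal class group, exactly parallel to the identification $J_K/\*K\big(\B\big)\cong Cl(\mathcal O_K)$ already used in the proof of Proposition \ref{P3.1}, with the constraint $(\alpha_{\pK})_{\pK|\infty}\in\Delta$ carried along unchanged.
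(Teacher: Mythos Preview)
Your proof is correct and follows essentially the same line as the paper's. For the first isomorphism the paper passes through the intermediate isomorphism $J_K^+/K^+\cong J_K/\*K$ and then identifies the images of the relevant subgroups, whereas you apply the first isomorphism theorem directly to the restriction of the quotient map; both hinge on Lemma~\ref{L2.1}(2) and the identity $J_K^+\cap\*K\big(\A\big)=K^+\big(\A\big)$, which you verify exactly as the paper does. For the second isomorphism your map $\Phi$ to $I_K$ followed by passage to $I_K/P_K^+$ is just a two-step version of the paper's single map $\theta\colon J_K^+\to I_K/P_K^+$, with the same kernel computation.
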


\begin{proof}
From Lemma \ref{L2.1} we obtain that the natural map 
$\varphi\colon J_K^+\mapsto J_K/\*K$ is surjective and $\ker\varphi=
\*K\cap J_K^+=K^+$. Let $\rho =\hat{\varphi}^{-1}\colon J_K/\*K
\lra J_K^+/K^+$ be the induced isomorphism. Then $\rho\big(\*K
\big(\A)/\*K\big)=\big(J_K^+\cap \*K\big(\A\big)\big)/K^+$. It follows
that
\[
\frac{J_K^+/K^+}{\big(J_K^+\cap \*K\big(\A\big)\big)/K^+}\cong
\frac{J_K/\*K}{\*K\big(\A\big)/\*K}.
\]
The first isomorphism follows since
$J_K^+\cap \*K\big(\A\big)=K^+\big(\A\big)$.

For the second isomorphism consider the map $\theta\colon
J_K^+\lra I_K/P_K^+$ given by $\big((\alpha_{\pK})_{\pK|\infty},(\alpha_{
\pK})_{\pK\nmid \infty}\big)\mapsto \prod_{\pK\nmid\infty}\pK^{v_{\pK}
(\alpha_{\pK})}\bmod P_K^+$. Then $\theta$ is a group epimorphism
and $\ker \theta=K^+\big(\A\big)$.
\end{proof}

\begin{definition}\label{D3.3''}{\rm{
The {\em extended ideal class group of $K$} is defined by
\[
Cl^+\big({\mathcal O}_K\big):=\frac{I_K}{P_K^+}\cong \Gal(
K_H^+/K).
\]
}}
\end{definition}

\begin{proposition}\label{P3.4}
The extension $K_H^+/k$ is a Galois extension.
\end{proposition}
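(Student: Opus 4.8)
The goal is to show that $K_H^+/k$ is Galois, not merely $K_H^+/K$. Since $K/k$ is Galois (it is a cyclic Kummer extension) and $K_H^+/K$ is Galois, by the usual criterion it suffices to show that $K_H^+$ is stable under the action of $\Gal(k^{\mathrm{sep}}/k)$, or equivalently that for each $\sigma\in\Gal(K/k)$ (extended arbitrarily to an automorphism $\tilde\sigma$ of a fixed separable closure) we have $\tilde\sigma(K_H^+)=K_H^+$. By class field theory, $K_H^+$ is the class field of $K$ associated to the open finite-index subgroup $\*K\big(\A\big)$ of $J_K$, and $\tilde\sigma(K_H^+)$ is the class field associated to $\sigma(\*K(\A))$, where $\sigma$ acts on $J_K$ through the natural action of $\Gal(K/k)$ on the id\`eles of $K$. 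Hence the statement reduces to the single claim that the subgroup $\*K\big(\A\big)=\*K\big(\Delta\times\prod_{\pK\nmid\infty}U_{\pK}\big)$ is stable under $\Gal(K/k)$.

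First I would verify the easy pieces: $\*K$ is visibly $\Gal(K/k)$-stable, and $\prod_{\pK\nmid\infty}U_{\pK}$ is stable because $\sigma$ permutes the finite places of $K$ and carries the local units at $\pK$ isomorphically onto the local units at $\sigma\pK$. The only delicate factor is $\Delta\subseteq\prod_{\pK|\infty}\*{K_{\pK}}$. Here $\sigma$ permutes the primes $\pK\mid\infty$ of $K$, inducing an isomorphism $\*{K_{\pK}}\to\*{K_{\sigma\pK}}$ compatible with the norm down to $k_\infty$; that is, $\N_{\*{K_{\sigma\pK}}/\*{k_\infty}}\big(\sigma(x_{\pK})\big)=\N_{\*{K_{\pK}}/\*{k_\infty}}(x_{\pK})$ for all $x_{\pK}$, since both completions sit over the same place $\infty$ of $k$ and $\sigma$ fixes $k$ pointwise. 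Consequently, for $(x_{\pK})_{\pK|\infty}\in\Delta$ one gets $\prod_{\pK|\infty}\N_{\*{K_{\sigma\pK}}/\*{k_\infty}}\big(\sigma(x_{\pK})\big)=\prod_{\pK|\infty}\N_{\*{K_{\pK}}/\*{k_\infty}}(x_{\pK})\in k_\infty^{*n}$, so $\sigma\big((x_{\pK})_{\pK|\infty}\big)\in\Delta$. Thus $\Delta$ is $\Gal(K/k)$-stable, and therefore so is $\*K\big(\A\big)$.

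Having established stability, I would conclude: $\sigma(\*K(\A))=\*K(\A)$ for every $\sigma\in\Gal(K/k)$, hence $\tilde\sigma(K_H^+)=K_H^+$ for every extension $\tilde\sigma$ of every such $\sigma$, and since in addition $\Gal(K_H^+/K)$ is normal with $K/k$ Galois, the tower $k\subseteq K\subseteq K_H^+$ is Galois, i.e. $K_H^+/k$ is a Galois extension. The main (and really the only) obstacle is checking the norm-compatibility $\N_{\*{K_{\sigma\pK}}/\*{k_\infty}}\circ\sigma=\N_{\*{K_{\pK}}/\*{k_\infty}}$ carefully so that $\Delta$'s defining condition is preserved; once that is in hand the rest is the standard Galois-descent argument for class fields and is routine.
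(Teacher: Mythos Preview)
Your proposal is correct and follows essentially the same approach as the paper: the paper's proof simply asserts that $\rho\big(\A\big)=\A$ for every $k$--embedding $\rho$ (equivalently, for every $\sigma\in\Gal(K/k)$) and that $K/k$ is Galois, which is exactly the stability argument you spell out in detail. Your verification of the norm compatibility $\N_{\*{K_{\sigma\pK}}/\*{k_\infty}}\circ\sigma=\N_{\*{K_{\pK}}/\*{k_\infty}}$ to check that $\Delta$ is $\Gal(K/k)$--stable is precisely the content the paper leaves implicit.
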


\begin{proof}
It follows from the facts that $\rho(\A)=\A$ for all $k$--embeddings
$\rho$ of $K_H^+$ into a fixed algebraic closure of $K_H^+$ and that
$K/k$ is a Galois extension.
\end{proof}

\begin{proposition}\label{P3.5}
The finite primes in $K$ that decompose fully in $K_H^+$ are precisely the
principal ideals generated by an element $\beta\in \*K$ satisfying $\N_{K/k}
(\beta)\in k_{\infty}^{*n}$.
\end{proposition}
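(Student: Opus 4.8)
The plan is to translate the decomposition statement into the language of class field theory, using the description of $\Gal(K_H^+/K)$ provided by Proposition \ref{P3.3'} and the definition of $K_H^+$ as the class field attached to $\*K(\A)$. Recall that for an abelian extension $L/K$ given by an open finite-index subgroup $H\subseteq J_K$ via the Artin map, a finite prime $\pK$ of $K$ (unramified in $L$, which here is automatic for finite primes by Remark \ref{R3.3}) decomposes fully in $L$ precisely when a uniformizer id\`ele $\pi_{\pK}$ at $\pK$ lies in $H=\*K(\A)$; equivalently, the Artin symbol of $\pK$ is trivial. So the first step is to fix a finite prime $\pK$ of $K$ and write down the condition ``$\pi_{\pK}\in\*K(\A)$'' where $\pi_{\pK}$ is the id\`ele with a uniformizer in the $\pK$-component and $1$ elsewhere.

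Next I would unwind this membership. Using Lemma \ref{L2.1}(2) one has $J_K=\*K J_K^+$, and the cleaner route is to pass to the ideal-theoretic picture: under the isomorphism $J_K/\*K(\A)\cong I_K/P_K^+$ from Proposition \ref{P3.3'}, the class of $\pK$ corresponds exactly to the ideal class $[\pK]\in I_K/P_K^+$. Hence $\pK$ splits completely in $K_H^+$ if and only if $[\pK]$ is trivial in $I_K/P_K^+$, i.e. $\pK\in P_K^+$. By the very definition of $P_K^+$, this means $\pK=(\beta)$ for some $\beta\in K^+=\{x\in\*K\mid \N_{K/k}(x)\in k_{\infty}^{*n}\}$. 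That is precisely the asserted characterization, so the core of the argument is just chasing the prime $\pK$ through the two isomorphisms of Proposition \ref{P3.3'}.

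The one point that needs genuine care is the compatibility between ``full decomposition of $\pK$ in the class field $K_H^+$'' and ``triviality of the Artin symbol of $\pK$,'' together with checking that the Artin symbol of $\pK$ in $\Gal(K_H^+/K)\cong J_K/\*K(\A)$ really does match the ideal class $[\pK]$ under the chain of isomorphisms $J_K/\*K(\A)\cong J_K^+/K^+(\A)\cong I_K/P_K^+$. For the first part, a prime decomposes fully in an abelian extension iff its Frobenius (Artin symbol) is trivial and it is unramified; finite primes are unramified in $K_H^+/K$ by construction (Remark \ref{R3.3}). For the second, one uses that the global Artin map sends the id\`ele $\pi_{\pK}$ (uniformizer at $\pK$, $1$ elsewhere) to the Frobenius at $\pK$, and that the map $\theta$ in the proof of Proposition \ref{P3.3'}, which sends $J_K^+\to I_K/P_K^+$ by $\vec\alpha\mapsto\prod_{\pK\nmid\infty}\pK^{v_\pK(\alpha_\pK)}$, carries $\pi_{\pK}$ to $[\pK]$. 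Thus triviality of the Artin symbol of $\pK$ is equivalent to $[\pK]=1$ in $I_K/P_K^+$, i.e. $\pK\in P_K^+$.

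I expect this last bookkeeping — lining up the Artin map with the concrete isomorphisms — to be the only real obstacle; once it is in place the proof is essentially a one-line consequence of Proposition \ref{P3.3'} and Definition \ref{D3.3}. A careful write-up would state: $\pK$ decomposes fully in $K_H^+$ $\iff$ the Artin symbol $\big(\pK, K_H^+/K\big)$ is trivial $\iff$ the class of $\pi_{\pK}$ in $J_K/\*K(\A)$ is trivial $\iff$ $[\pK]$ is trivial in $I_K/P_K^+$ $\iff$ $\pK=(\beta)$ with $\beta\in\*K$ and $\N_{K/k}(\beta)\in k_{\infty}^{*n}$, which is what we wanted.
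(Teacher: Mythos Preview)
Your argument is correct. The paper takes a more hands-on route: it invokes the class-field-theoretic criterion that $\pK$ splits completely iff $K_{\pK}^*\subseteq \*K\big(\A\big)$, and since $U_\pK$ already lies in $\A$ this reduces to the uniformizer id\`ele $(1,\ldots,\pi,\ldots,1)$ belonging to $\*K\big(\A\big)$; writing it as $\beta\vec\alpha$ with $\beta\in\*K$ and $\vec\alpha\in\A$, one reads off from the finite valuations that $\pK=\langle\beta\rangle$, and from the infinite components that $(\beta^{-1})_{\pK|\infty}\in\Delta$, i.e.\ $\N_{K/k}(\beta)\in k_\infty^{*n}$. You instead pass through the ideal-theoretic isomorphism $\Gal(K_H^+/K)\cong I_K/P_K^+$ of Proposition~\ref{P3.3'} and identify the Artin symbol of $\pK$ with the class $[\pK]$, so that full splitting becomes $\pK\in P_K^+$. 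Both arguments are short; yours is a bit more conceptual and reuses Proposition~\ref{P3.3'} as a black box, while the paper's is self-contained id\`ele arithmetic that in effect reproves the relevant piece of the map $\theta$ from that proposition.
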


\begin{proof}
From class field theory, see for instance \cite[Corolario 17.6.47]{RzeVil2017},
we have that $\pK$ decomposes fully in $K_H^+/K$ if and only if
$\*{K_{\pK}}\subseteq \*K\big(\A\big)$. Let $\pi$ be such that $v_{\pK}(\pi)=1$. We have
\[
\*{K_{\pK}}\subseteq \*K\Big(\A\Big) \iff (1,1,\ldots,x,1,\ldots )\in \*K\Big(\A\Big)
\]
for each $x\in \*{K_{\pK}}$, in particular for $x=\pi$. Therefore there exist
$\beta\in \*K$ and $\vec\alpha\in \A$ such that $(1,1,\ldots,\pi,1,\ldots)=\beta
\vec\alpha$. It follows that $v_{\mathfrak q}(\beta)=0$ for every finite prime 
${\mathfrak q}\neq \pK$ and $v_{\pK}(\beta^{-1}\pi)=0$. Therefore the only prime
dividing $\langle\beta\rangle$ is $\pK$ and it does so to the power 1. Hence $\pK=
\langle\beta\rangle$.

On the other hand, $(\beta^{-1})_{{\mathfrak q}|\infty}\in \Delta$ so that $\N_{K/k}
(\beta)=\prod_{{\mathfrak q}|\infty}\N_{\*{K_{\mathfrak q}}/\*{k_{\infty}}}(\beta)\in 
k_{\infty}^{*n}$.
\end{proof}

\begin{corollary}\label{C3.6}
If $Q\in R_T^+$ is inert in $K$, then ${\mathfrak q}$ decomposes fully in $K_H^+$
where ${\mathfrak q}=Q{\mathcal O}_K$ is the prime in $K$ above $Q$.
\end{corollary}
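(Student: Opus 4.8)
Since $Q$ is inert in $K/k$, the ring ${\mathcal O}_K$ contains $R_T$ and $Q{\mathcal O}_K$ is a prime ideal lying above $Q$; moreover, because $R_T$ is a principal ideal domain and $Q$ is monic irreducible, $Q$ generates the ideal $QR_T$, hence $Q$ generates ${\mathfrak q} = Q{\mathcal O}_K$ as a principal ideal of ${\mathcal O}_K$ with $Q \in R_T \subseteq \*K$. So ${\mathfrak q} = \langle Q\rangle$ with $Q \in \*K$, and it remains only to check the norm condition $\N_{K/k}(Q) \in k_{\infty}^{*n}$.

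For the norm computation, observe that $Q \in k = {\mathbb F}_q(T)$, so $\N_{K/k}(Q) = Q^n$. The hard part — really the only thing to verify — is that $Q^n \in k_{\infty}^{*n}$, which is immediate since $Q^n = (Q)^n$ is literally an $n$-th power in $k_{\infty}^*$. Thus $Q$ satisfies the hypothesis of Proposition \ref{P3.5}, and therefore ${\mathfrak q}$ decomposes fully in $K_H^+$.

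I would write this up in essentially three lines: first note ${\mathfrak q} = Q{\mathcal O}_K$ is principal generated by $Q \in \*K$ because $Q$ already generates $QR_T$ and $Q$ is inert; second compute $\N_{K/k}(Q) = Q^n \in k_{\infty}^{*n}$ since $Q$ lies in the base field $k$; third invoke Proposition \ref{P3.5}. No serious obstacle arises — the statement is a clean corollary whose content is simply that inert primes of $R_T^+$, being principal with a generator in the rational function field, automatically have norm an $n$-th power. One might add a remark that the same argument shows any finite prime of $K$ lying above a principal prime of $R_T$ and generated by an element of $k$ decomposes fully in $K_H^+$, but for the corollary as stated the inert case suffices.
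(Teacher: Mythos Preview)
Your proposal is correct and follows the same route as the paper: take $\beta=Q$ as the principal generator of ${\mathfrak q}$, compute $\N_{K/k}(Q)=Q^n\in k_\infty^{*n}$, and apply Proposition~\ref{P3.5}. The paper compresses this into the single line ``$\N_{K/k}({\mathfrak q})=Q^n$. The result follows,'' so your write-up simply unpacks the implicit identification of the generator and the norm computation.
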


\begin{proof}
We have $\N_{K/k}({\mathfrak q})=Q^n$. The result follows.
\end{proof}

\begin{definition}\label{D3.7}{\rm{
We define the {\em extended genus field $K_g^+$ of $K$ (relative to $k$}) as the
maximal abelian extension of $k$ contained in $K_H^+$.
}}
\end{definition}

\begin{remark}\label{R3.8}{\rm{
From class field theory, see for instance \cite[Proposici\'on 17.6.48]{RzeVil2017},
the field $K_g^+$ is the class field associated to $\*k\N_{K/k}\big(\A\big)$.
}}
\end{remark}

\begin{proposition}\label{P3.9}
The degree of $K_g^+$ over $k$ and the degree of $K_g^+$ over $K$ are given by
\[
\big[K_g^+:k\big]=n\prod_{i=1}^r e_i\quad \text{and}\quad \big[K_g^+:K\big]
=\prod_{i=1}^r e_i.
\]
\end{proposition}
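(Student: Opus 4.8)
The plan is to compute $\big[K_g^+:k\big]$ directly from the class field description and then divide by $[K:k]=n$. By Remark~\ref{R3.8}, $K_g^+$ is the class field of $k$ attached to $\*k\,\N_{K/k}\big(\A\big)\subseteq J_k$, so $\big[K_g^+:k\big]=\big[J_k:\*k\,\N_{K/k}\big(\A\big)\big]$; since $K/k$ is cyclic of degree $n$ we have $K\subseteq K_g^+$, hence $\big[K_g^+:K\big]=\big[K_g^+:k\big]/n$. So everything reduces to the index $\big[J_k:\*k\,\N_{K/k}\big(\A\big)\big]$.

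First I would describe $\N_{K/k}\big(\A\big)$ explicitly inside $B:=\*{k_{\infty}}\times\prod_{P\in R_T^+}U_P$. The global norm acts on id\`eles by taking, at each prime $P$ of $k$, the product of the local norms over the primes of $K$ above $P$; applied to $\A=\Delta\times\prod_{\pK\nmid\infty}U_{\pK}$ this yields $\N_{K/k}(\Delta)$ in the $\infty$--component and $N_P:=\prod_{\pK\mid P}\N_{\*{K_{\pK}}/\*{k_P}}(U_{\pK})$ in the $P$--component. Since $\N_{\*{K_{\pK}}/\*{k_P}}(U_{\pK})=U_P\cap\N_{\*{K_{\pK}}/\*{k_P}}(\*{K_{\pK}})$ (a norm of a unit has zero valuation, forcing the argument to be a unit), Remark~\ref{R2.4} shows this group is independent of $\pK\mid P$, so $N_P=\N_{\*{K_{\pK}}/\*{k_P}}(U_{\pK})$; local class field theory identifies $U_P/N_P$ with the inertia group of $P$ in $K/k$, so $[U_P:N_P]=e_P$, which is $1$ for the unramified primes and $e_i$ for $P=P_i$. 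At $\infty$, Lemma~\ref{L2.3} gives $\N_{K/k}(\Delta)=\N\big(\prod_{\pK\mid\infty}\*{K_{\pK}}\big)\cap k_{\infty}^{*n}$; since the local degree $e_{\infty}f_{\infty}$ divides $n$ and $\N_{\*{K_{\pK}}/\*{k_{\infty}}}(x)=x^{e_{\infty}f_{\infty}}$ for $x\in\*{k_{\infty}}$, we get $k_{\infty}^{*n}\subseteq\N\big(\prod_{\pK\mid\infty}\*{K_{\pK}}\big)$ and hence $\N_{K/k}(\Delta)=k_{\infty}^{*n}$. Therefore
\[
\N_{K/k}\big(\A\big)=k_{\infty}^{*n}\times\prod_{i=1}^{r}N_{P_i}\times\!\!\prod_{P\in R_T^+\setminus\{P_1,\ldots,P_r\}}\!\!U_P\ \subseteq\ B,
\]
and by Lemma~\ref{L2.2}, $\big[B:\N_{K/k}(\A)\big]=\big[\*{k_{\infty}}:k_{\infty}^{*n}\big]\cdot\prod_{i=1}^{r}[U_{P_i}:N_{P_i}]=n^{2}\prod_{i=1}^{r}e_i$.

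Next I would pass to the global index via Lemma~\ref{L2.1}(1), namely $J_k=\*k\,B$, together with $\*k\cap B=\f$ (the units of $R_T$). Since $\N_{K/k}(\A)\subseteq B$, the modular law gives $\*k\,\N_{K/k}(\A)\cap B=\f\,\N_{K/k}(\A)$, whence
\begin{align*}
\big[J_k:\*k\,\N_{K/k}(\A)\big]&=\big[\*k\,B:\*k\,\N_{K/k}(\A)\big]=\frac{\big[B:\N_{K/k}(\A)\big]}{\big[\f:\f\cap\N_{K/k}(\A)\big]}\\
&=\frac{n^{2}\prod_{i=1}^{r}e_i}{\big[\f:\f\cap\N_{K/k}(\A)\big]}.
\end{align*}
It remains to show $\f\cap\N_{K/k}(\A)=\F^{*n}$, which has index $n$ in $\f$ because $n\mid q-1$. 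For $a\in\f$ (diagonally embedded), membership in $\N_{K/k}(\A)$ forces $a\in k_{\infty}^{*n}$; writing a would-be $n$--th root via the splitting $U_{\infty}=\f\times U_{\infty}^{(1)}$ and using that $U_{\infty}^{(1)}$ is pro-$p$ with $\gcd(n,p)=1$ shows $k_{\infty}^{*n}\cap\f=\F^{*n}$. Conversely, if $a\in\F^{*n}$ then $a\in k_{\infty}^{*n}$, and at each $P_i$ the divisibility $e_i\mid n$ together with tameness (which forces $U_{P_i}^{(1)}\subseteq N_{P_i}$ and $N_{P_i}\cap{\mathbb F}_{P_i}^{*}=({\mathbb F}_{P_i}^{*})^{e_i}$, where ${\mathbb F}_{P_i}$ is the residue field at $P_i$) gives $a\in\F^{*n}\subseteq N_{P_i}$; the other components are automatic since $a$ is a unit everywhere. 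Hence $\f\cap\N_{K/k}(\A)=\F^{*n}$, so $\big[K_g^+:k\big]=n^{2}\prod_{i=1}^{r}e_i/n=n\prod_{i=1}^{r}e_i$ and $\big[K_g^+:K\big]=\big[K_g^+:k\big]/n=\prod_{i=1}^{r}e_i$.

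The main obstacle is the local analysis at the ramified finite primes: establishing $[U_{P_i}:N_{P_i}]=e_i$ (local class field theory plus the identification of $N_{P_i}$ with the norm group of units via Remark~\ref{R2.4}) and $\F^{*n}\subseteq N_{P_i}$, both of which rely on $e_i\mid n$ (automatic since $K/k$ is cyclic of degree $n$) and on tame ramification (as $n\mid q-1$ is prime to the characteristic). Everything else is routine index bookkeeping via the modular law and Lemmas~\ref{L2.1} and~\ref{L2.2}.
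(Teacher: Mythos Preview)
Your proof is correct and follows essentially the same approach as the paper: both compute $[J_k:\*k\,\N_{K/k}(\A)]$ by combining Lemma~\ref{L2.1}(1), Lemma~\ref{L2.2}, Remark~\ref{R2.4}, the local class field identity $[U_P:\N_{K_{\pK}/k_P}(U_{\pK})]=e_P$, and the equality $\*k\cap B=\f$. You in fact supply more detail than the paper on two points it leaves implicit, namely that $\N_{K/k}(\Delta)=k_{\infty}^{*n}$ and that $\f\cap\N_{K/k}(\A)={\mathbb F}_q^{*n}$.
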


\begin{proof}
Let $P\in R_T^+$. Then from Remark \ref{R2.4} we obtain
that $\prod_{\pK|P}\N_{K_{\pK}/k_P}(U_{\pK})=
\N_{K_{\pK}/k_P}(U_{\pK})$ for any fixed prime $\pK|P$. From the
theory of local fields, we have $[U_P:\N_{K_{\pK}/k_P}(U_{\pK})]=
e_P$, the ramification index of $P$ in $K/k$. Recall that 
$e_P=1$ if $P$ is unramified and $e_{P_i}=e_i$, $1\leq i\leq r$.

Therefore, from Lemmas \ref{L2.2} and \ref{L2.1} and since $\*k\cap \big(\*{k_{\infty}}\times
\prod_{P\in R_T^+}U_P\big)=\f$, we obtain
\begin{align*}
\big[K_g^+:k\big]&=\big[J_k/\*k:\big(\*k\N_{K/k}\big(\A\big)\big)/\*k\big]\\
&=\big[\*k\big(\*{k_{\infty}}\times \prod_{P\in R_T^+}U_P\big):
\*k\N_{K/k}\big(\A\big)\Big]\\
&=\frac{\big[\*{k_{\infty}}\times \prod_{P\in R_T^+}U_P:
\N_{K/k}\big(\A\big)\big]}{\big[\*k\cap \big(\*{k_{\infty}}\times 
\prod_{P\in R_T^+}U_P\big):\*k\cap \big(
\N_{K/k}\big(\A\big)\big)\big]}\\
&=\frac{\big[\*{k_{\infty}}\times \prod_{P\in R_T^+}U_P:
\N_{K/k}\big(\A\big)\big]}
{\big[\f:{\mathbb F}_q^{*n}\big]}\\
&=\frac{[\*{k_{\infty}}:k_{\infty}^{*n}]\cdot \prod_{P\in R_T^+}
[U_P:\N_{K_{\pK}/k_P}(U_{\pK})]}{n}=\frac{n^2\prod_{i=1}^re_i}{n}=
n\prod_{i=1}^re_i.
\end{align*}
Finally, since $[K:k]=n$, it follows that $\big[K_g^+:K\big]=\prod_{i=1}^r e_i$.
\end{proof}

Define $\Gamma:={\mathbb F}_{q^n}\big(T,\sqrt[e_1]{P_1},\ldots,
\sqrt[e_r]{P_r}\big)$. Then $[\Gamma:k]=n\prod_{i=1}^r e_i=
[K_g^+:k]$ and $\Gamma/k$ is an abelian extension.
On the other hand, by Abhyankar's Lemma, the ramification
index of $P_i$ in $K\Gamma$ is $e_i$, $1\leq i\leq r$ and $\Gamma/k$
is unramified at every $P\in R_T^+\setminus\{P_1,\ldots,P_r\}$. It follows
that $\Gamma/K$ is unramified at every finite prime $P\in R_T^+$.

We are ready to prove our main result, which gives an explicit and nice 
expression for $K_g^+$.  

\begin{theorem}\label{T3.10}
Let $n\in{\mathbb N}$ be a natural number dividing $q-1$: $n|q-1$. Let
$K/k$ be a cyclic Kummer extension of degree $n$, $K=k\big(
\sqrt[n]{D}\big)$ with $D=\gamma P_1^{\alpha_1}\cdots P_r^{\alpha_r}\in
R_T$, $\gamma\in \f$, $P_1,\ldots,P_r\in R_T^+$ and $1\leq\alpha_i\leq n-1$
for $1\leq i\leq r$. The ramified finite primes are $P_1,\ldots,P_r$. Let
$e_i$ be the ramification index of $P_i$ in $K/k$, $1\leq i\leq r$.

Then
\[
K_g^+=\Gamma={\mathbb F}_{q^n}\big(T,\sqrt[e_1]{P_1},\ldots,
\sqrt[e_r]{P_r}\big).
\]
\end{theorem}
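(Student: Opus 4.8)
The strategy is to show the two fields coincide by proving containment in one direction and then matching degrees. We already know from Proposition~\ref{P3.9} that $[K_g^+:k]=n\prod_{i=1}^r e_i$, and from the discussion immediately preceding the theorem that $[\Gamma:k]=n\prod_{i=1}^r e_i$ as well, and that $\Gamma/k$ is abelian. So it suffices to prove $\Gamma\subseteq K_H^+$: since $K_g^+$ is by definition (Definition~\ref{D3.7}) the maximal abelian-over-$k$ subextension of $K_H^+$, the inclusion $\Gamma\subseteq K_H^+$ together with $\Gamma/k$ abelian forces $\Gamma\subseteq K_g^+$, and then the degree equality $[\Gamma:k]=[K_g^+:k]$ upgrades this to $\Gamma=K_g^+$.

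\textbf{Showing $\Gamma\subseteq K_H^+$ via class field theory.} By Remark~\ref{R3.3}, $K_H^+$ is the class field of $K$ associated to $\*K(\A)$, so by the ordering-reversing correspondence of class field theory it is enough to check that the norm group of $\Gamma K/K$ contains $\*K(\A)$; equivalently, that every idèle in $\A$ is a norm from $\Gamma K$. First I would record that $\Gamma/K$ is unramified at every finite prime (this is exactly the Abhyankar's Lemma computation stated just before the theorem: the ramification index of each $P_i$ in $K\Gamma/k$ is $e_i$, matching that in $K/k$, and $\Gamma/k$ is unramified outside $P_1,\dots,P_r$). Hence for every finite prime $\pK$ of $K$ the local unit group $U_{\pK}$ lies in the local norm group, which takes care of the factor $\prod_{\pK\nmid\infty}U_{\pK}$ of $\A$. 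It remains to handle the infinite part, i.e.\ to show $\Delta\subseteq \prod_{\pK|\infty}\N_{(\Gamma K)_{\mathfrak P}/K_{\pK}}\big((\Gamma K)_{\mathfrak P}^*\big)$. The constant-field extension $\mathbb{F}_{q^n}K/K$ contributes an unramified (hence norm-compatible at a uniformizer only up to the residue-degree constraint) piece at infinity, while the part $K\big(\sqrt[e_i]{P_i}\big)/K$ is unramified at infinity because $P_i$ is a unit at $\infty$ and $n\mid q-1$. The key point is that $[\Gamma K:K]=[\Gamma:k]/[K:k]\cdot(\text{correction})$ — more cleanly, $[\Gamma K:K]=\prod e_i$ after accounting that $K\subseteq\Gamma K$ and $\mathbb F_{q^n}\subseteq\Gamma$ — and that at the infinite primes $\Gamma K/K$ is unramified of the appropriate degree, so the local norm index at $\infty$ matches what is needed for $\Delta$.

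\textbf{The main obstacle.} The delicate step is the infinite-prime bookkeeping: one must verify that $\Delta$ — defined by the condition that the product of local norms to $k_\infty$ lands in $k_\infty^{*n}$ — is contained in the product of the local norm groups of $\Gamma K/K$ over the primes above $\infty$. Here I would use Corollary~\ref{C2.5}, which gives $[\prod_{\pK|\infty}\*{K_{\pK}}:\Delta]=n^2/(e_\infty f_\infty)$, compare it against the product of local norm indices $\prod_{\pK|\infty}[\*{K_{\pK}}:\N(\ast)]$ for the unramified extension $\Gamma K/K$ at $\infty$, and check these are compatible with the containment; the cleanest route is to avoid computing the infinite-prime norm groups directly and instead argue globally: $\*K(\A)\subseteq N_{\Gamma K/K}(J_{\Gamma K})$ iff $[J_K:\*K(\A)]\ge[J_K:N_{\Gamma K/K}(J_{\Gamma K})]=[\Gamma K:K]$ together with the correct local containments at the finite primes, which we already have, plus a dimension count forcing equality everywhere. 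Since $[K_H^+:K]=[J_K:\*K(\A)]$ is divisible by $[K_g^+:K]=\prod e_i=[\Gamma:K]/[\mathbb{F}_{q^n}:\mathbb{F}_q]\cdot\dots$, the numerics close up. Alternatively — and this may be the slickest presentation — I would invoke Proposition~\ref{P3.5}: a finite prime $\pK=\langle\beta\rangle$ of $K$ decomposes fully in $K_H^+$ exactly when $\N_{K/k}(\beta)\in k_\infty^{*n}$, and one checks directly that every finite prime of $K$ splitting completely in $\Gamma$ has this property (using that inert primes $Q$ give $\N=Q^n$, cf.\ Corollary~\ref{C3.6}, and that ramification in $\Gamma$ is governed by the $e_i$), so $\Gamma\subseteq K_H^+$ follows from the fact that both are described by their split primes. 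I would present the first (norm-group) argument as the main line, since it gives the inclusion most transparently, and close with the degree comparison from Proposition~\ref{P3.9}.
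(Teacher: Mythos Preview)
Your overall architecture is correct and matches the paper: reduce to showing $\Gamma\subseteq K_H^+$, then invoke the degree equality from Proposition~\ref{P3.9}. Your treatment of the finite primes via Abhyankar's Lemma is also the same as the paper's. The gap is precisely at the step you flag as ``the main obstacle'': the infinite primes.

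The issue is twofold. First, the statement you set out to prove there, namely that $\Delta$ lies inside the product of \emph{local} norm groups $\prod_{\pK\mid\infty}\N_{(\Gamma K)_{\mathfrak P}/K_{\pK}}\big((\Gamma K)_{\mathfrak P}^*\big)$, is stronger than what is needed and need not hold: $\Delta$ is defined by a condition on the \emph{product} of the norms down to $k_\infty$, not on each factor separately, so an element of $\Delta$ has no reason to be a local norm at each $\pK\mid\infty$ individually. What you actually need is the global inclusion $\Delta\subseteq \*K\,\N_{\Gamma K/K}(J_{\Gamma K})$, where the presence of $\*K$ is essential. Second, your proposed workarounds do not close the gap: the index comparison ``$[J_K:\*K(\A)]\ge[\Gamma K:K]$ plus local containments at finite primes'' does not imply the required containment (an index inequality gives no information about \emph{which} subgroup is larger), and the split-prime argument via Proposition~\ref{P3.5} is stated in the wrong direction --- to get $\Gamma\subseteq K_H^+$ you would need that every prime splitting completely in $K_H^+$ also splits in $\Gamma K$, not the converse.

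The paper's device, which you are missing, is to exploit that $\Gal(\Gamma/k)$ has exponent $n$. One argues for any abelian $L/k$ of exponent $n$ containing $K$ and unramified over $K$ at finite primes: using the commutative square of Artin maps $\rho_K$, $\rho_k$ linked by $\N_{K/k}$ and the inclusion $\Gal(L/K)\hookrightarrow\Gal(L/k)$, one observes that for $\vec\alpha\in\Delta$ (viewed in $J_K$ with $1$'s at finite places) the norm $\N_{K/k}(\vec\alpha)$ lands in $J_k^n$ (its only nontrivial component is in $k_\infty^{*n}$ by definition of $\Delta$), hence is killed by $\rho_k$ since $\Gal(L/k)$ has exponent $n$. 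By commutativity and injectivity of the inclusion, $\rho_K(\Delta)=1$, i.e.\ $\Delta\subseteq\ker\rho_K=\*K\,\N_{L/K}(J_L)$. This one-line global argument replaces all of your local bookkeeping at infinity.
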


\begin{proof}
It suffices to prove that $\Gamma\subseteq K_H^+$ since $K_g^+$
is the maximal abelian extension of $k$ contained in $K_H^+$
and $\Gamma/k$ is an abelian extension. Now, let 
$H:=\Gal(\Gamma/k)\cong C_n\times C_{e_1}\times\cdots
\times C_{e_r}$. Since $e_i|n$ for all
$1\leq i\leq r$,  $H$ is of exponent $n$. Therefore, it is enough to show
that any abelian extension of $k$, containing $K$, of exponent $n$ and
such that it is unramified at the finite primes of $K$, is contained in
$K_H^+$.

Let $L$ be such an extension. By class field theory, it is enough to prove
that $\*K\big(\A\big)\subseteq \*K\N_{L/K}(J_L)$. We have the following
commutative diagram
\[
\xymatrix{
J_K\ar@{->}[rr]^{\rho_K}\ar@{->}[d]_{\N_{K/k}}&&\Gal(L/K)
\ar@{->}[d]^{\iota}\\ 
J_k\ar@{->}[rr]_{\rho_k}&&\qquad{\ }\qquad{\ }&
\hspace{-73pt}\Gal(L/k)\cong C_{m_1}\times\cdots\times C_{m_t},
}
\]
where $\rho_K$ and $\rho_k$ denote Artin's reciprocity maps,
$\iota$ is the natural embedding and $m_j|n$, $1\leq j\leq t$.
The norm of an element $\vec\alpha\in \Delta$ is of the form $(\beta,
1,\ldots,1, \ldots )\in J_k^n$. Therefore $(\beta,1,\ldots,1, \ldots )\in
\ker \rho_k$. Hence $\rho_K(\Delta)\in \ker \rho_K=\*K\N_{L/K}(J_L)$.
Since $L/K$ is unramified at every finite prime, it follows that $U_{\pK}
\subseteq \*K\N_{L/K}(J_L)$ for every finite prime $\pK$. Therefore
$\A\subseteq \*K\N_{L/K}(J_L)$. The result follows.
\end{proof}

\section{ambiguous classes}\label{S4}

We understand by {\em ambiguous classes} the elements of $Cl^+({\mathcal
O}_K)$ fixed under the action of $G:=\Gal(K/k)$:
$Cl^+({\mathcal O}_K)^G$. We are interested in the number of such classes.

Let $G=\Gal(K/k)=\langle\sigma\rangle$. Let $\rho\colon Cl^+({\mathcal O}_K)
\lra Cl^+({\mathcal O}_K)^{1-\sigma}$ be the map $[{\mathfrak a}]\mapsto
[{\mathfrak a}][{\mathfrak a}]^{-\sigma}$ for ${\mathfrak a}\in I_K$ and
$[{\mathfrak a}]={\mathfrak a}\bmod P_K^+$. Then $\rho$ is an epimorphism and
$\ker\rho=Cl^+({\mathcal O}_K)^G$. In particular, $\frac{Cl^+({\mathcal O}_K)}
{Cl^+({\mathcal O}_K)^{1-\sigma}}\cong Cl^+({\mathcal O}_K)^G$.
Let ${\mathcal G}:=\Gal(K_H^+/k)$. Since $K_g^+$ is the maximal abelian
extension of $k$ contained in $K_H^+$, we have that the commutator
subgroup ${\mathcal G}'$ is isomorphic to $\Gal(K_H^+/K_g^+)$.
\[
\xymatrix{
&&K_g^+\ar[r]^{{\mathcal G}'}&K_H^+\\
K\ar@/^1pc/[rru]|{Cl^+({\mathcal O}_K)/{\mathcal G}'}\ar[rrru]|<(.2){{Cl^+
(\mathcal O}_K)}\\
k\ar[u]^G\ar@/_2pc/[rrruu]_{\mathcal G}\ar@/_1pc/[rruu]|{{\mathcal G}/{\mathcal G}'}
}
\]

Now, we have that ${\mathcal G}'\cong Cl^+({\mathcal O}_K)^{1-\sigma}$.
To find $|Cl^+({\mathcal O}_K)^G|$ we need several results
on cohomology theory, most of them well known.

First, we have the exact sequence
\[
1\lra K^+\lra \*K\lra \*K/K^+\lra 1,
\]
From Hilbert's theorem 90, we have $H^1(G,\*K)=\{1\}$, therefore we obtain
the cohomology exact sequence
\[
1\lra \*k\lra \*k\lra (\*K/K^+)^G\lra H^1(G,K^+)\lra 1,
\]
so that $H^1(G,K^+)\cong (\*K/K^+)^G$. We have, for any $a\in \*K$,
$\sigma(a)/a\in K^+$, which implies that $(\*K/K^+)^G=\*K/K^+$.
Using the approximation theorem, we obtain that
\begin{gather*}
\*K/K^+\cong \big(\prod_{\pK|\infty}\*{K_{\pK}}\big)/\Delta.
\intertext{From Corollary \ref{C2.5} it follows that}
|H^1(G,K^+)|=\frac{n^2}{e_{\infty}f_{\infty}}.
\end{gather*}

\begin{lemma}\label{L4.1}
The Herbrand quotient of $U_K$ is $h(G,U_K)=\frac{e_{\infty}f_{\infty}}{n}$.
\end{lemma}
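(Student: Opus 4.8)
The Herbrand quotient is $h(G,U_K) = |H^2(G,U_K)| / |H^1(G,U_K)|$, so the plan is to compute the two cohomology groups separately and take the ratio. The starting point is the exact sequence $1 \to U_K^+ \to U_K \to U_K/U_K^+ \to 1$ together with the identification, already available from Lemma~\ref{L2.8}, of $U_K/U_K^+$ with the quotient $\big(\*K\cap(\prod_{\pK|\infty}\*{K_{\pK}}\times\prod_{\pK\nmid\infty}U_{\pK})\big)\big/\big(\*K\cap(\Delta\times\prod_{\pK\nmid\infty}U_{\pK})\big)$, and with the preceding observation that $|H^1(G,K^+)| = n^2/(e_\infty f_\infty)$. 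I expect to relate $h(G,U_K)$ to the Herbrand quotient of $U_K^+$ (or of $K^+$ and $\*K$) via the multiplicativity of the Herbrand quotient on short exact sequences of $G$-modules with finite cohomology.

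The key steps, in order: First I would recall that for a finite cyclic group $G$ the Herbrand quotient is multiplicative in short exact sequences, and that $h(G,M)=1$ whenever $M$ is finite; in particular $h(G,U_K/U_K^+)=1$ since that quotient is finite (it divides $n$ by Lemma~\ref{L2.9}), so $h(G,U_K)=h(G,U_K^+)$. Second, I would bring in $K^+$: from the exact sequence $1\to U_K^+\to K^+\to K^+/U_K^+\to 1$, where $K^+/U_K^+$ is (via the divisor map) identified with the group of principal ideals generated by elements of $K^+$, i.e. a free abelian group on which $G$ acts by permuting generators according to how the finite primes split; one computes $h(G,K^+/U_K^+)$ from the orbit structure — each ramified prime $P_i$ contributes a factor and each unramified prime contributes $1$ — giving $h(G,K^+/U_K^+)=\prod_{i=1}^r e_i$ after accounting for the constraint coming from the infinite primes. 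Alternatively, and more cleanly, I would use the exact sequence $1\to K^+\to \*K\to \*K/K^+\to 1$ directly: $h(G,\*K)$ is classical (for a function field with field of constants $\F$, $h(G,\*K)$ reduces to a local contribution at the ramified places, here $h(G,\*K)=\frac{1}{n}\prod_{\text{ram. places}}e_\pK$ after the standard computation, where the product runs over all ramified places including $\infty$), and $h(G,\*K/K^+)$ equals $|H^2|/|H^1|$ of the finite-looking module $\prod_{\pK|\infty}\*{K_{\pK}}/\Delta$. Third, I would assemble: $h(G,U_K) = h(G,K^+) \cdot \big(\text{correction from } K^+/U_K^+\big)^{-1}$, and substitute the value $|H^1(G,K^+)| = n^2/(e_\infty f_\infty)$ computed above, together with $|H^2(G,K^+)|$ obtained from global class field theory or from $h(G,\*K)$ and the known $h(G,\*K/K^+)$.

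The main obstacle I anticipate is pinning down $|H^2(G,K^+)|$, equivalently the full Herbrand quotient $h(G,K^+)$: one needs to control the $G$-module structure of $\*K$ modulo $K^+$ at the infinite primes (the group $\prod_{\pK|\infty}\*{K_{\pK}}/\Delta$ has order $n^2/(e_\infty f_\infty)$ by Corollary~\ref{C2.5}, but its Herbrand quotient as a $G$-module — which need not be trivial since it is not obviously induced — must be extracted, using that $G$ acts transitively on the $n/(e_\infty f_\infty)$ places above $\infty$ and that $\Delta$ is defined by a norm condition). Once $\*K/K^+$ is seen to be cohomologically trivial or its $H^2$ is computed (I expect it to come out so that $|H^2(G,K^+)|=e_\infty f_\infty$, matching $|H^1(G,K^+)|\cdot h(G,K^+)$ with $h(G,K^+)=(e_\infty f_\infty)^2/n^2$), the final division $h(G,U_K)=h(G,K^+)\cdot\prod_i e_i^{-1}\cdot(\text{global factor})$ should collapse to $\frac{e_\infty f_\infty}{n}$. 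I would double-check the bookkeeping against the special case where $\infty$ is totally inert (so $e_\infty=1$, $f_\infty=n$, and by Remark~\ref{R2.10} $U_K=U_K^+$), where the formula predicts $h(G,U_K)=1$, consistent with $U_K=\f$ being a finite module.
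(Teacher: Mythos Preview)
Your plan has a genuine gap: the Herbrand quotients you intend to compute on the way to $h(G,U_K)$ simply do not exist. Both of your proposed routes pass through either $h(G,K^+)=h(G,\*K)$ or through $h(G,P_K^+)=h(G,I_K)$ (the last equality because $Cl^+({\mathcal O}_K)$ is finite), and all of these are infinite. Indeed $H^1(G,\*K)=1$ by Hilbert~90 while $H^2(G,\*K)\cong\hat H^0(G,\*K)=k^*/\N_{K/k}(\*K)$, which by the Hasse norm theorem embeds into $\bigoplus_P k_P^*/\N_{K_\pK/k_P}(\*{K_\pK})$; since infinitely many finite primes $P$ have $e_Pf_P>1$, this group is infinite. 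Likewise $H^2(G,I_K)\cong\bigoplus_{P\in R_T^+}\hat H^0(D_P,{\mathbb Z})\cong\bigoplus_{P\in R_T^+} C_{e_Pf_P}$ is infinite. So the multiplicativity of the Herbrand quotient cannot be invoked on the exact sequences $1\to U_K^+\to K^+\to P_K^+\to 1$ or $1\to K^+\to \*K\to \*K/K^+\to 1$: one of the three terms fails to have a Herbrand quotient, and your asserted formula $h(G,\*K)=\frac{1}{n}\prod_{\text{ram.}}e_\pK$ is not meaningful (note it would be $<1$ for an everywhere unramified extension, contradicting $H^1(G,\*K)=1$).

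The paper proceeds in a completely different and much more direct way: it uses Dirichlet's unit theorem, $U_K\cong{\mathbb Z}^{m-1}\times\f$ with $m=n/(e_\infty f_\infty)$ the number of infinite places of $K$, and exhibits an explicit $G$--stable sublattice $V\subseteq U_K$ of finite index together with a $G$--module isomorphism $V\cong{\mathbb Z}[G/D]/{\mathbb Z}$, where $D$ is the decomposition group at~$\infty$. Then $h(G,U_K)=h(G,V)=h(G,{\mathbb Z}[G/D])/h(G,{\mathbb Z})=(n/m)/n=1/m=e_\infty f_\infty/n$, using Shapiro's lemma for the induced module. This is the classical ``place--at--infinity'' computation and stays entirely within modules whose cohomology is finite; you should redo the argument along these lines. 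Your sanity check in the totally inert case is fine, and your first reduction $h(G,U_K)=h(G,U_K^+)$ is correct, but beyond that the route through $K^+$ or $\*K$ cannot be salvaged.
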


\begin{proof}
From Dirichlet's unit theorem, we have that $U_K\cong{\mathbb Z}^{m-1}\times \f$
where $m$ is the number of primes of $K$ above the infinite prime $\p$ of $k$. 
Let $\pK_1, \ldots, \pK_m$ be the primes of $K$ that lie above $\p$, ordered such that
$\sigma(\pK_j)=\pK_{j+1}$ for $1\leq j\leq m-1$ and $\sigma(\pK_m)=\pK_1$.
Choose $a\in{\mathbb N}$ such that $\big(\frac{\pK_j}{\pK_{j+1}}\big)^a=
\langle\mu_j\rangle$ is a principal ideal and $\mu_j\in U_K$, for all $1\leq j
\leq m-1$. We have $\sigma(\mu_j)=\mu_{j+1}$ for $1\leq j\leq m-2$ and
$\sigma(\mu_{m-1})=(\mu_1\cdots \mu_{m-1})^{-1}=:\mu_m$. Thus $\sigma
(\mu_m)=\mu_1$. 

It follows that $V:=\langle \mu_1,\ldots,\mu_{m-1}\rangle$ is a $G$--submodule
of $U_K$ of finite index. Furthermore, $V\cong \big({\mathbb Z}[G/D]\big)/
{\mathbb Z}$ as $G$--modules, where $D$ is the decomposition group of
any of the primes of $K$ above $\p$.

We have an exact sequence of $G$--modules
\begin{gather*}
1\lra V\lra U_K\lra F\lra 1,
\intertext{where $F$ is finite. Then we have $h(G,U_K)=h(G,V)$. Now,
from the exact sequence of $G$--modules}
1\lra {\mathbb Z}\lra {\mathbb Z}[G/D]\lra V\lra 1,
\intertext{we obtain that}
h(G,V)=\frac{h(G,{\mathbb Z}[G/D])}{h(G,{\mathbb Z})}=\frac{n/m}{n}=\frac 1m
=\frac{e_{\infty}f_{\infty}}{n}.
\end{gather*}
\end{proof}

\begin{lemma}\label{L4.2}
We have $|H^1(G,U_K^+)|=n^2/e_{\infty}f_{\infty}$.
\end{lemma}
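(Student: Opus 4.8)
The plan is to extract $|H^1(G,U_K^+)|$ from the short exact sequence of $G$--modules
\[
1\lra U_K^+\lra U_K\lra U_K/U_K^+\lra 1
\]
by combining the long exact cohomology sequence with a Herbrand–quotient computation, exactly as one does for $K^+\subseteq\*K$ above. First I would record that $U_K/U_K^+$ is finite (Lemma~\ref{L2.9} gives $[U_K:U_K^+]\mid n$), so it has Herbrand quotient $1$; hence $h(G,U_K^+)=h(G,U_K)=\frac{e_{\infty}f_{\infty}}{n}$ by Lemma~\ref{L4.1}. Thus
\[
\frac{|H^1(G,U_K^+)|}{|H^2(G,U_K^+)|}=\frac{e_{\infty}f_{\infty}}{n},
\]
and it remains to pin down $|H^2(G,U_K^+)|$, equivalently $|\hat H^0(G,U_K^+)|=[U_k^+:\N_{K/k}(U_K^+)]$ where $U_k^+=U_K^+\cap\*k$. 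The cyclicity of $G$ makes $H^2\cong\hat H^0$, which is the concrete object to attack.

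Next I would identify $U_k:=U_K\cap\*k=\f$ and $U_k^+=\f\cap U_K^+$. Since for $c\in\f$ we have $\N_{K/k}(c)=c^n$, the condition $\N_{K/k}(c)\in{\mathbb F}_q^{*n}$ is automatic, so $U_k^+=\f$. Therefore $\hat H^0(G,U_K^+)=\f/\N_{K/k}(U_K^+)$. To compute $\N_{K/k}(U_K^+)$ I would use the analogous statement for $U_K$: the image $\N_{K/k}(U_K)\subseteq\f$ is a subgroup whose index is governed by the splitting of $\infty$ — in fact one expects $[\f:\N_{K/k}(U_K)]=e_\infty f_\infty/\gcd(\cdots)$ type formula, but more usefully $U_K^+=\ker\big(U_K\xrightarrow{\N_{K/k}}\f/{\mathbb F}_q^{*n}\big)$ from Lemma~\ref{L2.9}, so $\N_{K/k}(U_K^+)=\N_{K/k}(U_K)\cap{\mathbb F}_q^{*n}$. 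Combining, $|\hat H^0(G,U_K^+)|=[\f:\N_{K/k}(U_K)\cap{\mathbb F}_q^{*n}]$. A short diagram chase with the quotient $\f/{\mathbb F}_q^{*n}\cong C_n$ and the image of $U_K$ inside it (which has order $[U_K:U_K^+]$) shows this index equals $n\cdot[\f:\N_{K/k}(U_K)]/[U_K:U_K^+]$, and an alternative, cleaner route is to apply $\hat H^0$ directly to the exact sequence above and use the already–computed $|H^1(G,K^+)|=n^2/e_\infty f_\infty$ together with $\*K/K^+\cong U_K/U_K^+\times(\text{something involving }\N)$ — but I think the most economical argument bypasses this entirely.

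Indeed, the slickest path, and the one I would actually write, is to feed the sequence $1\to U_K^+\to U_K\to U_K/U_K^+\to 1$ into the \emph{long} exact sequence and use that $H^i(G,U_K/U_K^+)$ for a finite module contributes only through its Herbrand quotient $1$, while simultaneously invoking the computation of $|H^1(G,K^+)|=n^2/e_\infty f_\infty$ from the displayed computation preceding Lemma~\ref{L4.1}: since $\*K/K^+\cong\big(\prod_{\pK|\infty}\*{K_{\pK}}\big)/\Delta$ and $U_K^+=U_K\cap K^+$, there is a commutative square relating $H^1(G,U_K^+)$, $H^1(G,K^+)$, $H^1(G,U_K)$ and $H^1(G,\*K)=1$, forcing $H^1(G,U_K^+)\twoheadrightarrow H^1(G,K^+)$; comparing orders via the Herbrand quotient $h(G,U_K^+)=e_\infty f_\infty/n$ then yields $|H^1(G,U_K^+)|=n^2/e_\infty f_\infty$ and $|H^2(G,U_K^+)|=n$ as a byproduct. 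The main obstacle is bookkeeping: making sure that the finite module $U_K/U_K^+$ really drops out cleanly (its $\hat H^0$ and $H^1$ need not vanish individually, only their ratio is controlled), so one must either argue through the Herbrand quotient multiplicativity or compute $[\f:\N_{K/k}(U_K^+)]$ honestly; I expect the honest computation of that norm index, tracking the contribution of whether $\infty$ splits, ramifies or is inert, to be the one delicate point.
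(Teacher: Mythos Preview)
Your strategy---Herbrand quotient plus direct computation of $\hat H^0(G,U_K^+)$---is exactly the paper's, but you talk yourself out of the easy step. You correctly get $(U_K^+)^G=\f$, and you correctly note $\N_{K/k}(U_K^+)\subseteq{\mathbb F}_q^{*n}$ from the very definition of $U_K^+$. What you miss is that the reverse inclusion is equally trivial: $\f\subseteq U_K^+$ (since $\N_{K/k}(c)=c^n\in{\mathbb F}_q^{*n}$ for $c\in\f$), and $\N_{K/k}(\f)=\{c^n:c\in\f\}={\mathbb F}_q^{*n}$. Hence $\N_{K/k}(U_K^+)={\mathbb F}_q^{*n}$ on the nose, so $\hat H^0(G,U_K^+)=\f/{\mathbb F}_q^{*n}\cong C_n$. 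There is no ``delicate point'' and no need to know $\N_{K/k}(U_K)$, to track how $\infty$ behaves, or to chase any diagram involving $K^+$ and $\*K$; the detours in your second and third paragraphs are unnecessary.

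One further slip: your displayed identity $\dfrac{|H^1(G,U_K^+)|}{|H^2(G,U_K^+)|}=\dfrac{e_\infty f_\infty}{n}$ has the Herbrand quotient upside down relative to the convention used in Lemma~\ref{L4.1} (where $h(G,{\mathbb Z})=n$ forces $h=|\hat H^0|/|H^1|$). With the correct orientation $|\hat H^0(G,U_K^+)|/|H^1(G,U_K^+)|=e_\infty f_\infty/n$ and $|\hat H^0(G,U_K^+)|=n$, the answer $|H^1(G,U_K^+)|=n^2/(e_\infty f_\infty)$ drops out in one line.
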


\begin{proof}
Since $U_K/U_K^+$ is finite, it follows that $h(G,U_K)=h(G,U_K^+)=e_{\infty}
f_{\infty}/n$. Now, we have the Tate cohomology group
\[
\hat{H}^0(G,U_K^+)=\frac{(U_K^+)^G}{\N_{K/k}(U_K^+)}=\frac{\f}{
{\mathbb F}_q^{*n}}\cong C_n.
\]
The result follows.
\end{proof}

\begin{lemma}\label{L4.3}
We have $|I_K/I_k|=e_1\cdots e_r$.
\end{lemma}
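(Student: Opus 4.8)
The plan is to compute the index one prime of $k$ at a time. First I would fix the structure of the groups in play: $I_K$ is the free abelian group on the finite primes $\pK$ of $K$, $I_k$ is the free abelian group on the monic irreducibles $P\in R_T^+$, and $I_k$ is embedded in $I_K$ --- in fact in the subgroup $I_K^G$ of $G$-invariant fractional ideals, which is the part that is finite over $I_k$ --- by extension of ideals, $\mathfrak a\mapsto \mathfrak a\,{\mathcal O}_K$. Since $K/k$ is Galois, $G$ acts transitively on the primes $\pK_1,\dots,\pK_{g_P}$ of $K$ lying over a given $P\in R_T^+$, and these share a common ramification index $e_P$; writing $\mathfrak P_P:=\pK_1\cdots\pK_{g_P}$ for the product of the distinct primes over $P$, we thus have $P\,{\mathcal O}_K=\mathfrak P_P^{\,e_P}$. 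Recall also that $e_P=1$ unless $P\in\{P_1,\dots,P_r\}$, and $e_{P_i}=e_i$ for $1\le i\le r$.

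Next I would split the computation over $P$. A fractional ideal $\prod_{\pK}\pK^{\,n_{\pK}}$ is $G$-fixed precisely when the exponent $n_{\pK}$ depends only on the prime of $k$ below $\pK$; hence $I_K^G=\bigoplus_{P\in R_T^+}\mathbb Z\,\mathfrak P_P$, whereas $I_k=\bigoplus_{P\in R_T^+}\mathbb Z\,P$ maps into it by $P\mapsto\mathfrak P_P^{\,e_P}$. Taking quotients summand by summand gives
\[
I_K^G/I_k\;\cong\;\bigoplus_{P\in R_T^+}\mathbb Z/e_P\mathbb Z\;\cong\;\bigoplus_{i=1}^{r}\mathbb Z/e_i\mathbb Z,
\]
which is finite of order $e_1\cdots e_r=\prod_{i=1}^{r}e_i$, as claimed.

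The computation itself presents no real obstacle --- it is bookkeeping once the groups are correctly set up. The two points that need care are: (i) invoking that $K/k$ is Galois, so that all primes above a fixed $P$ carry the same ramification index $e_P$ and therefore $P\,{\mathcal O}_K$ is the $e_P$-th power of $\mathfrak P_P$; and (ii) taking the quotient with the $G$-invariant group $I_K^G$, since a prime of $k$ with more than one prime above it contributes extra free rank to all of $I_K$. Equivalently, using that $R_T$ is a PID one may identify $I_k$ with $\bigoplus_{P\in R_T^+}\mathbb Z$ and compute the cokernel of its inclusion into $I_K^G\cong\bigoplus_{P\in R_T^+}\mathbb Z$ directly, the inclusion being multiplication by $e_P$ in the $P$-th coordinate.
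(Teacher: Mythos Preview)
Your argument is correct and follows essentially the same route as the paper: both identify $I_K^G$ as the free abelian group on the products $\mathfrak P_P=\prod_{\pK\mid P}\pK$, observe that the embedding of $I_k$ sends $P$ to $\mathfrak P_P^{\,e_P}$, and read off $I_K^G/I_k\cong\bigoplus_{P}\mathbb Z/e_P\mathbb Z$. You were right to pass to $I_K^G$ rather than all of $I_K$; this is indeed the quantity the paper's proof computes and the one invoked in Theorem~\ref{T4.4}.
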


\begin{proof}
For any $P\in R_T^+$, let ${\mathfrak a}_P=\big(\prod_{\pK|P}\pK)^{e_P}$ be the
conorm of $P$, where $e_P$ denotes the ramification index of $P$ in $K/k$.
Then $I_K^G$ is the free abelian group with free generators 
$\big\{{\mathfrak a}_P\big\}_{P\in R_T^+}$. Since $I_k$ is the free abelian group with generators
$\{P\}_{P\in R_T^+}=\big\{{\mathfrak a}_P^{e_P}\big\}_{P\in R_T^+}$ and the ramified finite primes
are $P_1,\ldots,P_r$ with ramification indices $e_1,\ldots, e_r$, we get the result.
\end{proof}

\begin{theorem}\label{T4.4}
The number of ambiguous classes $\big|Cl^+({\mathcal O}_K)^G\big|$ is equal to
$e_1\cdots e_r$.
\end{theorem}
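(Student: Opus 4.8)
The plan is to read off the order of $Cl^+({\mathcal O}_K)^G$ from the two short exact sequences of $G$--modules
\[
1\lra P_K^+\lra I_K\lra Cl^+({\mathcal O}_K)\lra 1
\qquad\text{and}\qquad
1\lra U_K^+\lra K^+\lra P_K^+\lra 1,
\]
using throughout that $R_T=\F[T]$ is a principal ideal domain. First, $I_K$ is the direct sum over $P\in R_T^+$ of the permutation $G$--modules ${\mathbb Z}[G/D_P]$, where $D_P$ is the decomposition group of a prime of $K$ above $P$; since $D_P$ is finite, Shapiro's lemma gives $H^1(G,I_K)=\bigoplus_P H^1(D_P,{\mathbb Z})=0$. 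Hence the long exact cohomology sequence of the first short exact sequence collapses to
\[
1\lra (P_K^+)^G\lra I_K^G\lra Cl^+({\mathcal O}_K)^G\lra H^1(G,P_K^+)\lra 1,
\]
and therefore $\big|Cl^+({\mathcal O}_K)^G\big|=\big[I_K^G:(P_K^+)^G\big]\,\big|H^1(G,P_K^+)\big|$.

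Next I would control the two factors by means of the second short exact sequence. Here $(U_K^+)^G=\f$ and $(K^+)^G=\*k$, and the image of $\*k$ in $(P_K^+)^G$ under $x\mapsto x{\mathcal O}_K$ is exactly the subgroup $I_k$ of conorms of ideals of $R_T$ (again because $R_T$ is a principal ideal domain, and because every $x\in\*k$ lies in $K^+$ since $x^n\in k_{\infty}^{*n}$). The long exact sequence of the second short exact sequence then yields
\[
(P_K^+)^G/I_k\;\cong\;\ker\big(H^1(G,U_K^+)\lra H^1(G,K^+)\big)
\]
together with a short exact sequence
\[
1\lra\operatorname{coker}\big(H^1(G,U_K^+)\to H^1(G,K^+)\big)\lra H^1(G,P_K^+)\lra\ker\big(H^2(G,U_K^+)\to H^2(G,K^+)\big)\lra 1 .
\]
By the computation preceding Lemma \ref{L4.1} and by Lemma \ref{L4.2} we have $\big|H^1(G,U_K^+)\big|=\big|H^1(G,K^+)\big|=n^2/(e_{\infty}f_{\infty})$; hence the kernel and the cokernel of $H^1(G,U_K^+)\to H^1(G,K^+)$ have the same order, namely $\big[(P_K^+)^G:I_k\big]$.

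Combining these, the factor $\big[(P_K^+)^G:I_k\big]$ cancels, and using $\big[I_K^G:I_k\big]=e_1\cdots e_r$ (the content of Lemma \ref{L4.3}) and $H^2\cong\hat{H}^0$ for the cyclic group $G$, one obtains
\[
\big|Cl^+({\mathcal O}_K)^G\big|=(e_1\cdots e_r)\cdot\big|\ker\big(\hat{H}^0(G,U_K^+)\lra\hat{H}^0(G,K^+)\big)\big| .
\]
So the theorem reduces to the assertion that this last kernel is trivial. Since $\hat{H}^0(G,U_K^+)=\f/{\mathbb F}_q^{*n}$ and $\hat{H}^0(G,K^+)=\*k/\N_{K/k}(K^+)$, this is equivalent to $\f\cap\N_{K/k}(K^+)={\mathbb F}_q^{*n}$: if $c=\N_{K/k}(\beta)\in\f$ with $\beta\in K^+$, then $c\in k_{\infty}^{*n}$ by the definition of $K^+$, and the decomposition $U_{\infty}=\f\times U_{\infty}^{(1)}$ (as in the proof of Lemma \ref{L2.2}) forces $\f\cap k_{\infty}^{*n}={\mathbb F}_q^{*n}$, whence $c\in{\mathbb F}_q^{*n}$.

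I expect the real content to sit in this last step: the norm condition defining $K^+$ is used precisely to make $\hat{H}^0(G,U_K^+)\hookrightarrow\hat{H}^0(G,K^+)$, and everything before it is bookkeeping with the two long exact sequences together with the cohomological computations already recorded in this section. (One could instead observe that $Cl^+({\mathcal O}_K)^G\cong Cl^+({\mathcal O}_K)/Cl^+({\mathcal O}_K)^{1-\sigma}\cong\Gal(K_g^+/K)$ and invoke $[K_g^+:K]=\prod_{i=1}^r e_i$ from Proposition \ref{P3.9}, but the cohomological route is the one that matches the machinery of this section.)
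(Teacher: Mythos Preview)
Your argument is correct and follows essentially the same route as the paper: both use the long cohomology sequences of $1\to P_K^+\to I_K\to Cl^+({\mathcal O}_K)\to 1$ and $1\to U_K^+\to K^+\to P_K^+\to 1$, together with Lemmas~\ref{L4.2} and \ref{L4.3} and the computation of $|H^1(G,K^+)|$, and the key step in both is the injectivity of $\hat H^0(G,U_K^+)\to\hat H^0(G,K^+)$ (the paper's map $\rho$), which you spell out explicitly via $\f\cap k_\infty^{*n}={\mathbb F}_q^{*n}$ whereas the paper simply asserts it. The only difference is bookkeeping: the paper packages the conclusion as a single four--term exact sequence once $\rho$ is known to be injective, while you track kernels and cokernels separately and cancel.
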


\begin{proof}
From the exact sequence $1\lra P_K^+\lra I_K\lra Cl^+({\mathcal O}_K)\lra 1$, and
since $H^1(G,I_K)=\{1\}$, we obtain the cohomology sequence
\begin{gather*}
1\lra (P_K^+)^G\lra I_K^G\lra Cl^+({\mathcal O}_K)^G\lra H^1(G,P_K^+)\lra 1.
\intertext{Dividing the first two terms by $I_k=P_k\subseteq P_K^G$, we obtain}
\big|Cl^+({\mathcal O}_K)^G\big|=\frac{\big|I_K^G/I_k\big|}{\big|(P_K^+)^G/I_k\big|}
\cdot \big|H^1(G, P_K^+)\big|.
\end{gather*}

Next, we consider the exact sequence of $G$--modules
\begin{gather*}
1\lra U_K^+\lra K^+\lra P_K^+\lra 1.
\intertext{Since $\big(U_K^+\big)=\f$ and $(K^+)^G=\*k$, we obtain the exact 
cohomology sequence}
1\lra \f\lra \*k\lra \big(P_K^+\big)^G\lra H^1(G,U_K^+)\lra H^1(G,K^+)\\
\lra H^1(G,P_K^+)\xrightarrow{\ \nu\ } H^2(G,U_K^+)\xrightarrow{\ \rho\ }H^2(G,K^+)\lra \cdots
\end{gather*}

Now, we have that $I_k\cong\*k/\f$, that
\begin{gather*}
H^2(G,U_K^+)\cong H^0(G,U_K^+)=\frac{\big(U_K^+\big)^G}{\N_{K/k}\big(U_K^+\big)}=
\frac{\f}{{\mathbb F}_q^{*n}},\\
H^2(G,K^+)\cong H^0(G,K^+)=\frac{\big(K^+\big)^G}{\N_{K/k}\big(K^+\big)}=
\frac{\*k}{\N_{K/k}(K^+)}
\intertext{and that $\rho$ is an injective map. Therefore, we obtain the exact sequence}
1\lra \frac{\big(P_K^+\big)^G}{I_k}\lra H^1(G, U_K^+)\lra H^1(G, K^+)\lra H^1(G, P_K^+)
\lra 1.
\intertext{Therefore}
\frac{\big|H^1(G,P_K^+)\big|}{\big|\big(P_K^+\big)^G/I_k\big|}=
\frac{\big|H^1(G,K^+)\big|}{\big|H^1(G,U_K^+)\big|}.
\end{gather*}
The result now follows from Lemma \ref{L4.3}.
\end{proof}

\begin{theorem}\label{T4.5}
We have 
\begin{gather*}
\Gal(K_g^+/K)\cong\frac{Cl^+({\mathcal O}_K)}{Cl^+({\mathcal O}_K)^{
1-\sigma}}\cong Cl^+({\mathcal O}_K)^G. 
\end{gather*}
\end{theorem}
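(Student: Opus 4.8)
The plan is to prove the two isomorphisms in Theorem \ref{T4.5} by combining the Galois-theoretic picture already displayed in the diagram of Section \ref{S4} with the ambiguous-class computation. First I would establish the second isomorphism $\frac{Cl^+({\mathcal O}_K)}{Cl^+({\mathcal O}_K)^{1-\sigma}}\cong Cl^+({\mathcal O}_K)^G$: this is exactly the statement that the map $\rho\colon Cl^+({\mathcal O}_K)\lra Cl^+({\mathcal O}_K)^{1-\sigma}$, $[{\mathfrak a}]\mapsto [{\mathfrak a}]^{1-\sigma}$, is an epimorphism with kernel $Cl^+({\mathcal O}_K)^G$, which has already been noted in the text right after the definition of $\rho$. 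So the real content is the first isomorphism $\Gal(K_g^+/K)\cong Cl^+({\mathcal O}_K)^G$.

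For that, recall from the diagram that ${\mathcal G}=\Gal(K_H^+/k)$, that $Cl^+({\mathcal O}_K)\cong \Gal(K_H^+/K)$ sits inside ${\mathcal G}$ as a normal subgroup with quotient $G$, and that ${\mathcal G}'=\Gal(K_H^+/K_g^+)$ since $K_g^+$ is the maximal abelian subextension of $K_H^+/k$. Hence $\Gal(K_g^+/K)\cong \Gal(K_H^+/K)\big/\Gal(K_H^+/K_g^+)\cong Cl^+({\mathcal O}_K)/{\mathcal G}'$. The key lemma to invoke is the identification ${\mathcal G}'\cong Cl^+({\mathcal O}_K)^{1-\sigma}$, which the text has already stated: since ${\mathcal G}$ is generated by $\Gal(K_H^+/K)\cong Cl^+({\mathcal O}_K)$ together with any lift $\tilde\sigma$ of the generator $\sigma$ of $G$, the commutator subgroup ${\mathcal G}'$ is generated by the commutators $[{\mathfrak a}][{\mathfrak a}]^{-1}$ (trivial, since $Cl^+$ is abelian) and $[\tilde\sigma,[{\mathfrak a}]]=[{\mathfrak a}]^{\sigma-1}=[{\mathfrak a}]^{-(1-\sigma)}$, i.e. ${\mathcal G}'=Cl^+({\mathcal O}_K)^{1-\sigma}$, this being a subgroup of the abelian group $Cl^+({\mathcal O}_K)$. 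Combining, $\Gal(K_g^+/K)\cong Cl^+({\mathcal O}_K)/Cl^+({\mathcal O}_K)^{1-\sigma}$, and then the already-proved second isomorphism finishes the chain.

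Finally, as a consistency check — and to make the theorem genuinely informative rather than a formal tautology — I would remark that by Proposition \ref{P3.9} we have $[K_g^+:K]=\prod_{i=1}^r e_i$, while by Theorem \ref{T4.4} we have $|Cl^+({\mathcal O}_K)^G|=e_1\cdots e_r$, so both sides of the claimed isomorphism have the same order, consistent with the abstract isomorphism just established. The main obstacle is the commutator computation ${\mathcal G}'=Cl^+({\mathcal O}_K)^{1-\sigma}$: one must be careful that the conjugation action of a lift $\tilde\sigma\in{\mathcal G}$ on the normal subgroup $\Gal(K_H^+/K)$ agrees, under the reciprocity isomorphism $\Gal(K_H^+/K)\cong Cl^+({\mathcal O}_K)$, with the natural $G$-action $\sigma$ on $Cl^+({\mathcal O}_K)=I_K/P_K^+$ used to define ambiguous classes; this compatibility is the standard functoriality of the Artin map under conjugation, and once it is in place everything else is routine. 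Everything else — exactness of the relevant short exact sequences of groups and the third isomorphism theorem — is elementary.
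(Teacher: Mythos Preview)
Your proof is correct, but it takes a genuinely different route from the paper's own argument. Both approaches share the easy second isomorphism (kernel--image of the map $[{\mathfrak a}]\mapsto[{\mathfrak a}]^{1-\sigma}$) and the commutator computation showing $\rho\big(Cl^+({\mathcal O}_K)^{1-\sigma}\big)\subseteq {\mathcal G}'$ via functoriality of the Artin map. The difference lies in how the \emph{reverse} inclusion ${\mathcal G}'\subseteq Cl^+({\mathcal O}_K)^{1-\sigma}$ is obtained. You argue purely group--theoretically: since ${\mathcal G}$ is an extension of the cyclic group $G$ by the abelian group $A=Cl^+({\mathcal O}_K)$, the quotient ${\mathcal G}/A^{1-\sigma}$ is a central extension of a cyclic group and hence abelian, forcing ${\mathcal G}'\subseteq A^{1-\sigma}$. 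The paper instead invokes Theorem~\ref{T4.4} and (implicitly) Proposition~\ref{P3.9} to obtain $\big|Cl^+({\mathcal O}_K)^{1-\sigma}\big|=\big|{\mathcal G}'\big|$, and then concludes equality from the one inclusion plus the cardinality match. Thus for you Theorem~\ref{T4.4} is only a consistency check, whereas in the paper it is an essential ingredient of the proof. Your route is more self--contained and shows that Theorem~\ref{T4.5} is really a general fact about extensions of cyclic groups by abelian groups; the paper's route ties the isomorphism more tightly to the explicit arithmetic of $K/k$.
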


\begin{proof}
From the isomorphism $\frac{Cl^+({\mathcal O}_K)}
{Cl^+({\mathcal O}_K)^{1-\sigma}}\cong Cl^+({\mathcal O}_K)^G$
 and Theorem \ref{T4.4}, we obtain that $\big|
Cl^+({\mathcal O}_K)^{1-\sigma}\big|=\big|{\mathcal G}'\big|=
\big[K_H^+:K_g^+\big]$, where
${\mathcal G}=\Gal(K_H^+/k)$.

Let $\rho\colon Cl^+({\mathcal O}_K)\lra \Gal(K_H^+/K)
\subseteq {\mathcal G}$ be the Artin reciprocity map. For any
${\mathfrak b}\in Cl^+({\mathcal O}_K)$ we have
\begin{align*}
\rho\big({\mathfrak b}^{1-\sigma}\big)&=\rho({\mathfrak b})\rho\big({\mathfrak b}^{-\sigma}\big)
=\rho({\mathfrak b})\rho\big({\mathfrak b}^{\sigma}\big)^{-1}=
\rho({\mathfrak b})\Big(\sigma^{-1}\rho({\mathfrak b})\sigma\Big)^{-1}\\
&=\rho({\mathfrak b})\sigma^{-1} \rho({\mathfrak b})^{-1}\sigma\in{\mathcal G}'.
\end{align*}
Hence $\rho\big(Cl^+({\mathcal O}_K)^{1-\sigma}\big)=
{\mathcal G}'$ and we get the result. 
\end{proof}

\section{A reciprocity law for $K/k$}\label{S5}

Here we present a reciprocity law which is analogous to the quadratic
reciprocity law. Let $K=k\big(\sqrt[n]{D}\big)$ be 
as in Section \ref{S2}. Let $Q\in R_T^+$ be such that $Q\nmid D$.
Let ${\mathfrak q}$ be a prime in $K$ above $Q$. The extension $K_{\mathfrak q}/
k_Q$ of local fields is unramified of degree $f$, the inertia degree of 
${\mathfrak q}/Q$. We denote the residue fields by $\hat K$ and $\hat k$ 
respectively. If $Q$ is of degree $d$, then $|\hat k|=q^d$ and $|\hat K|
=q^{df}$. We denote by $\varphi_Q$ the element of $\Gal(K/k)$ that 
corresponds to the Frobenius generator of
$\Gal(\hat K/\hat k)$. Then $\varphi_Q$ is given by
\begin{gather*}
\varphi_Q\big(\sqrt[n]{D}\big)\equiv\big(\sqrt[n]{D}\big)^{q^d}\bmod \mathfrak q,\\
\intertext{that is,}
\frac{\varphi_Q\big(\sqrt[n]{D}\big)}{\sqrt[n]{D}}\equiv
D^{\frac{q^d-1}{n}}\bmod {\mathfrak q}.
\end{gather*}
Since $n|q^d-1$, both sides of the congruence belong to $k$.
Furthermore there exists $j\in{\mathbb N}$ such that $
\varphi_Q\big(\sqrt[n]{D}\big)/\sqrt[n]{D}=\zeta_n^j$, where $\zeta_n$
is a primitive $n$--th root of unity.

\begin{definition}{\rm{
We define the {\em residue symbol}
\begin{gather*}
\xbinom DQ_n\in \f
\intertext{as the unique $n$--th root of unity satisfying}
\xbinom DQ_n\equiv D^{\frac{q^d-1}{n}}\bmod Q.
\intertext{More generally, if $R=\prod_{j=1}^t Q_j^{\alpha_j}\in R_T$ is relatively prime to $D$,}
\xbinom DR_n:=\prod_{j=1}^t\xbinom D{Q_j}_n^{\alpha_j}.
\intertext{Equivalently, if ${\mathfrak a}$ is a non-zero ideal of $R_T$ relatively prime to $D$,}
\xbinom D{\mathfrak a}_n:=\prod_{P\in R_T^+}\xbinom DP_n^{v_P({\mathfrak a})}.
\end{gather*}
}}
\end{definition}

Note that $Q$ decomposes fully in $K$ if and only if $\xbinom DQ_n=1$.

The main properties of the symbol $\xbinom DQ_n$ are given in the
following proposition, we omit the straightforward proof.

\begin{proposition}\label{P5.2}
We have
\las
\item Let $C,D\in R_T$ and $Q\in R_T^+$ be such that $Q\nmid CD$. Then
\[
\xbinom CQ_n\xbinom DQ_n=\xbinom {CD}Q_n.
\]
\item For $Q\nmid D$, we have $\xbinom DQ_n=1$ if and only if
$D\bmod Q\in \big((R_T/\langle Q\rangle)^{*}\big)^n$.
\item For $a\in\f$, 
\begin{gather*}
\xbinom aQ_n=a^{\frac{q^d-1}{n}}.
\end{gather*}
\end{list}
\hfill $\fin$
\end{proposition}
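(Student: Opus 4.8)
The plan is to verify Proposition~\ref{P5.2} by working directly from the defining congruence $\xbinom DQ_n \equiv D^{(q^d-1)/n} \bmod Q$, where $d = \deg Q$, together with the general identities for the residue symbol on composite moduli. First I would establish (1): since $\xbinom CQ_n$ and $\xbinom DQ_n$ are the unique $n$-th roots of unity in $\f$ congruent to $C^{(q^d-1)/n}$ and $D^{(q^d-1)/n}$ modulo $Q$ respectively, their product is an $n$-th root of unity congruent to $(CD)^{(q^d-1)/n} \bmod Q$; by the uniqueness part of the definition this product equals $\xbinom{CD}Q_n$. The only subtlety is that the $n$-th roots of unity are distinct modulo $Q$, which holds because $n \mid q-1 \mid q^d-1$ and the reduction map sends $\mu_n \subseteq \f$ injectively into $(R_T/\langle Q\rangle)^*$ (distinct roots of $X^n-1$ stay distinct mod $Q$ since $X^n-1$ is separable in characteristic coprime to $n$).

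Next I would prove (2). The residue field $R_T/\langle Q\rangle$ is $\mathbb{F}_{q^d}$, a cyclic group of order $q^d-1$ under multiplication. For $u \in \mathbb{F}_{q^d}^*$, one has $u^{(q^d-1)/n} = 1$ in $\mathbb{F}_{q^d}$ if and only if the order of $u$ divides $(q^d-1)/n$, which (since $n \mid q^d-1$) happens exactly when $u$ is an $n$-th power in $\mathbb{F}_{q^d}^* = (R_T/\langle Q\rangle)^*$. Thus $\xbinom DQ_n \equiv 1 \bmod Q$ — equivalently $\xbinom DQ_n = 1$ in $\f$, using the injectivity just noted — precisely when $D \bmod Q \in \big((R_T/\langle Q\rangle)^*\big)^n$. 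This is the standard Euler-criterion argument for power residues in a finite field.

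For (3), given $a \in \f \subseteq R_T$ with $Q \nmid a$ (automatic since $a$ is a nonzero constant), I would simply observe that $a^{(q^d-1)/n}$ is already an element of $\f$ (because $\f$ is closed under the power map and $a^{q^d-1}$ need not be $1$ in general, but $a^{(q^d-1)/n}$ still lies in $\f$ as $\f$ is a group), and it is an $n$-th root of unity since $\big(a^{(q^d-1)/n}\big)^n = a^{q^d-1} = 1$ as $|\f| = q-1 \mid q^d-1$. By the defining congruence and uniqueness, $\xbinom aQ_n = a^{(q^d-1)/n}$, with equality in $\f$ rather than merely modulo $Q$ because constants reduce faithfully mod $Q$. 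Finally, I would note that the extension of all three statements from prime $Q$ to composite modulus ${\mathfrak a}$ (or $R$) is immediate from the multiplicative definition $\xbinom D{\mathfrak a}_n = \prod_P \xbinom DP_n^{v_P({\mathfrak a})}$, so no extra work is needed there.

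I do not expect any genuine obstacle here: every step is a routine application of the structure of finite fields and the uniqueness clause in the definition of the symbol — which is exactly why the paper says "we omit the straightforward proof." The only point requiring a word of care, and the one I would be sure to state explicitly, is the passage between "congruent modulo $Q$" and "equal in $\f$": this rests on the separability of $X^n - 1$ in characteristic not dividing $n$, which makes the reduction $\mu_n(\f) \hookrightarrow (R_T/\langle Q\rangle)^*$ injective for every $Q \in R_T^+$. Everything else is bookkeeping with the Euler criterion.
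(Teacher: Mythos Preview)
Your argument is correct and is exactly the routine verification the authors have in mind when they write ``we omit the straightforward proof''; there is no proof in the paper to compare against. One small slip: in part~(3) you parenthetically say ``$a^{q^d-1}$ need not be $1$ in general,'' but in fact it always is, since $a\in\f$ gives $a^{q-1}=1$ and $q-1\mid q^d-1$ --- you use precisely this a line later, so the mathematics is fine and only the aside should be deleted.
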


\begin{definition}\label{D5.3}{\rm{
Let $\pK$ be a prime in $k$ and let $R,S\in R_T$ be
two relatively prime non-zero polynomials: $\gcd(R,S)=1$.
We define the {\em Hilbert norm residue symbol} by
\[
\simbolo RS:=\rec RS,
\]
}} 
{\rm where} $\big(S,k_{\pK}\big(\sqrt[n]{R}\big)/k_{\pK}\big)$ {\rm denotes the local norm residue symbol.}

\end{definition}

We have the following {\em symbol product formula}.  
\[
\prod_{\pK}\simbolo RS=1,
\]
where $\pK$ runs through all the prime divisors of $k$, from which 
it is obtained the following {\em reciprocity law}. 
\begin{theorem}\label{T5.4}
Let $Q,R\in R_T^+$ be of degrees $\delta(Q)$ and $\delta(R)$ 
respectively. Then
\[
\xbinom Q{\langle R\rangle}_n \cdot 
\xbinom R{\langle Q\rangle}_n^{-1}=
\xxbinom {(-1)^{\delta(Q)\delta(R)}b_0^{\delta(Q)}}
{a_0^{\delta(R)}}^{\frac{q-1}{n}}=1.
\]
\end{theorem}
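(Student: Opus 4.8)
The plan is to deduce Theorem \ref{T5.4} from the symbol product formula $\prod_{\pK}\simbolo QR=1$ applied with the pair $R=Q$, $S=R$ (two distinct monic irreducibles), by computing each local Hilbert norm residue symbol $\simbolo QR$ explicitly at every prime $\pK$ of $k$. First I would recall that $k_{\pK}\big(\sqrt[n]{Q}\big)/k_{\pK}$ is unramified precisely at the primes $\pK$ not dividing $Q\cdot\infty$, and there the local norm residue symbol $\big(R,k_{\pK}(\sqrt[n]{Q})/k_{\pK}\big)$ is trivial whenever $R$ is a unit at $\pK$, i.e. at all finite $\pK\notin\{Q,R\}$. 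Hence the product collapses to the three potentially nontrivial contributions at $\pK=Q$, $\pK=R$, and $\pK=\infty$, and the identity becomes
\[
\simbolo QR\Big|_{\pK=Q}\cdot\simbolo QR\Big|_{\pK=R}\cdot\simbolo QR\Big|_{\pK=\infty}=1.
\]

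Next I would identify the first two factors with the residue symbols. At $\pK=R$ the extension $k_R(\sqrt[n]{Q})/k_R$ is unramified (since $R\nmid Q$), so the local norm residue symbol $\big(R,k_R(\sqrt[n]{Q})/k_R\big)$ is the Frobenius raised to $v_R(R)=1$, and the standard computation of its action on $\sqrt[n]{Q}$ gives exactly $\big(Q,k_R(\sqrt[n]{Q})/k_R\big)(\sqrt[n]{Q})/\sqrt[n]{Q}=\xbinom Q{\langle R\rangle}_n$ by the very description of $\varphi_R$ preceding the definition of the residue symbol (the congruence $\varphi_R(\sqrt[n]{Q})/\sqrt[n]{Q}\equiv Q^{(q^{\delta(R)}-1)/n}\bmod R$). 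Symmetrically, the factor at $\pK=Q$ equals $\xbinom R{\langle Q\rangle}_n^{-1}$: here $\sqrt[n]{Q}$ generates a ramified (totally ramified, tamely) extension of $k_Q$, and one uses the tame symbol formula $\big(S,k_Q(\sqrt[n]{Q})/k_Q\big)$ expressed via $(-1)^{v_Q(S)v_Q(Q)}S^{v_Q(Q)}/Q^{v_Q(S)}\bmod Q$, which after reduction and extraction of the $n$-th root contributes $\xbinom R{\langle Q\rangle}_n^{-1}$ (the sign and the pure power of $Q$ reduce to $1$ because $v_Q(R)=0$, $v_Q(Q)=1$). This is the step where I expect the bookkeeping of signs and which variable sits in which slot of $\simbolo{\cdot}{\cdot}$ to require the most care.

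Finally I would compute the contribution at the infinite prime. Writing $a_0$ and $b_0$ for the leading coefficients of $Q$ and $R$ (the paper's notation in the statement), one has $Q\equiv a_0 T^{\delta(Q)}$ and $R\equiv b_0 T^{\delta(R)}$ as elements of $k_{\infty}^*/k_{\infty}^{*n}$ up to one-units, and since $n|q-1$ the group $k_{\infty}^*/k_{\infty}^{*n}\cong C_n\times C_n$ (Lemma \ref{L2.2}) is generated by $T$ and by the constants. The tame Hilbert symbol at $\infty$ with uniformizer $1/T$ then evaluates to
\[
\simbolo QR\Big|_{\pK=\infty}=\xxbinom{(-1)^{\delta(Q)\delta(R)}b_0^{\delta(Q)}}{a_0^{\delta(R)}}^{\frac{q-1}{n}},
\]
using bilinearity and the explicit tame symbol $\big(u T^a, v T^b\big)_{\infty}=(-1)^{ab}u^b/v^a$ on leading terms, together with $\big(\tfrac{c}{d}\big)=c^{(q-1)/n}$ of Proposition \ref{P5.2}(3) for constants $c,d\in\f$. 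Substituting the three evaluated factors into $\prod_{\pK}\simbolo QR=1$ yields $\xbinom Q{\langle R\rangle}_n\cdot\xbinom R{\langle Q\rangle}_n^{-1}\cdot\xxbinom{(-1)^{\delta(Q)\delta(R)}b_0^{\delta(Q)}}{a_0^{\delta(R)}}^{\frac{q-1}{n}}=1$, which is the first claimed equality. The second equality, that this last symbol is itself $1$, follows because $Q,R$ are \emph{monic}, so $a_0=b_0=1$, whence the bracketed quantity is $(-1)^{\delta(Q)\delta(R)}$, and raising $\pm1$ to the power $(q-1)/n$ gives $1$ since $q$ is odd (or $(-1)=1$ in characteristic $2$); in either case the constant $(-1)^{\delta(Q)\delta(R)}$ is an $n$-th power in $\f$ because $n\mid q-1$ forces $-1\in\f^{*n}$ whenever $n$ is odd, and when $n$ is even the exponent $\delta(Q)\delta(R)$ being even or the residue symbol being $\pm1$ of order dividing $2\mid n$ still collapses to $1$ after the $(q-1)/n$ power. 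The main obstacle, as noted, is the careful sign/variable accounting in the tame symbol computations at $Q$, $R$ and $\infty$ — once those three local formulas are pinned down, the theorem is immediate from the product formula.
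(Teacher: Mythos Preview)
Your overall strategy --- invoke the product formula $\prod_{\pK}(Q,R)_{\pK}=1$, note that only the primes $Q$, $R$, and $\infty$ can contribute, and evaluate each of the three local tame symbols --- is precisely Clement's method in \cite[Proposition~4.1]{Cle92}, which is all the paper invokes. So for the first displayed equality your approach coincides with the paper's.

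Your justification of the final ``$=1$'', however, has a genuine gap. Assuming $a_0,b_0$ are the leading coefficients (so $a_0=b_0=1$ since $Q,R\in R_T^+$), the bracket becomes $(-1)^{\delta(Q)\delta(R)}$ and you must show $(-1)^{\delta(Q)\delta(R)\cdot (q-1)/n}=1$. Your case $n$ odd is correct: in odd characteristic $q-1$ is even while $n$ is odd, forcing $(q-1)/n$ to be even. But your treatment of even $n$ is not an argument. Take $q=3$, $n=2$, so $(q-1)/n=1$, and let $Q,R$ both have odd degree; then the bracket raised to the $(q-1)/n$ power is $(-1)^1=-1\neq 1$. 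This is consistent with the classical power-residue reciprocity in $\F[T]$, which reads $\xbinom{Q}{R}_n\xbinom{R}{Q}_n^{-1}=(-1)^{\frac{q-1}{n}\delta(Q)\delta(R)}$ and is genuinely $-1$ in such examples. Hence either $a_0,b_0$ denote something other than leading coefficients (the paper never defines them; you should check Clement's notation directly), or the ``$=1$'' tacitly assumes an extra condition such as $2n\mid q-1$. In any case, the hand-waving about ``order dividing $2\mid n$ still collapses to $1$'' is not valid and should be replaced by a precise identification of $a_0,b_0$ and a correct verification.
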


\begin{proof}
Similar to \cite[Proposition 4.1]{Cle92}.
\end{proof}

Finally, we give our generalization to Theorem 4.2
\cite{Cle92}.

\begin{theorem}\label{T5.5}
We have that a prime $\pK$ of ${\mathcal O}_K$ decomposes
fully in $K_g^+$ if and only if each finite prime of $k$ ramified in $K$,
that is, each $P_j$, $1\leq j\leq r$, decomposes fully in $k\big(
\sqrt[n]{B})/k$, where $B$ is a monic generator of $\N_{K/k}\pK$ 
and  $n$ divides $\deg B$.
\end{theorem}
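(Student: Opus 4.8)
The plan is to translate the decomposition condition through the tower $K \subseteq K_g^+ \subseteq K_H^+$ and the explicit description $K_g^+ = \Gamma = {\mathbb F}_{q^n}\big(T, \sqrt[e_1]{P_1},\ldots,\sqrt[e_r]{P_r}\big)$ obtained in Theorem \ref{T3.10}. First I would note that $K_g^+/k$ is abelian, so whether a prime $\pK$ of ${\mathcal O}_K$ splits fully in $K_g^+$ depends only on the prime $Q$ of $k$ below it, together with how $Q$ sits relative to $K$. Set $B$ a monic generator of $\N_{K/k}\pK$; by Proposition \ref{P3.5} and its corollary the relevant arithmetic data of $\pK$ is captured by $B$ and the factorization type of $Q$ in $K$. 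The key point is that $\Gamma$ is the compositum of ${\mathbb F}_{q^n}(T)$ (a constant field extension, in which \emph{every} prime of $K$ splits or stays inert according only to residue degrees, and which is automatically handled since $n \mid \deg B$) with the fields $k(\sqrt[e_i]{P_i})$ for $1 \le i \le r$.

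Next I would reduce "$\pK$ splits fully in $K_g^+$" to the two conditions: (a) $\pK$ splits fully in $K\cdot{\mathbb F}_{q^n}$, which is equivalent to $f_{K/k}(\pK) \cdot [\text{residue field of }\pK : {\mathbb F}_q]$ being divisible by $n$ — and here is exactly where the hypothesis "$n$ divides $\deg B$" enters, since $\deg B = f_{K/k}(Q) \cdot \deg Q$ when $Q$ is unramified, and more generally $\deg B$ records the residue degree of $\pK$ over ${\mathbb F}_q$ times the number of conjugates collected in the norm; and (b) $\pK$ splits fully in each $K(\sqrt[e_i]{P_i})/K$. For (b) I would use Kummer theory at the local completion $K_\pK$: the prime $\pK$ splits fully in $K(\sqrt[e_i]{P_i})$ iff $P_i$ is an $e_i$-th power in $K_\pK^*$, and since $\pK \nmid P_i$ this is a statement about the residue field, i.e. about $P_i \bmod \pK$ being an $e_i$-th power in the residue field of $\pK$, which via the norm symbol $\simbolo{P_i}{B}$ of Definition \ref{D5.3} and Proposition \ref{P5.2}(2) translates into a residue-symbol condition $\xbinom{P_i}{B}_{e_i} = 1$.

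Then I would apply the reciprocity law, Theorem \ref{T5.4}, to flip $\xbinom{P_i}{B}_{e_i}$ (or the $n$-th power version thereof) into $\xbinom{B}{P_i}$-type symbols — the product formula $\prod_\pK \simbolo{P_i}{B} = 1$ together with the explicit local computations at $\infty$ and at the ramified primes does the bookkeeping. The upshot is that condition (b) for all $i$ becomes precisely: $\xbinom{B}{P_j}_n = 1$ for $1 \le j \le r$, equivalently (by Note after Definition 5.1 / Proposition \ref{P5.2}(2)) that each $P_j$ decomposes fully in $k(\sqrt[n]{B})/k$. Combining with (a), which is absorbed into the hypothesis $n \mid \deg B$ guaranteeing the constant-field part splits, gives the claimed equivalence.

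The main obstacle I expect is the careful handling of the ramified primes $P_i$ and the infinite prime when one passes between the residue symbols at $\pK$ (a prime of $K$) and the residue symbols at $P_i$ (primes of $k$): one must check that the local norm-residue symbols $\big(\N_{K/k}(\text{unit}), k_{P_i}(\sqrt[n]{B})/k_{P_i}\big)$ are trivial at exactly the places one needs, so that the product formula isolates the right factor. Verifying that "$e_i$-th power in the residue field at $\pK$" and "$n$-th power, i.e. $\xbinom{B}{P_i}_n = 1$" genuinely coincide — using $e_i \mid n$ and that the relevant residue-degree factor is a multiple of $n/e_i$ — is the delicate step; here the hypothesis $n \mid \deg B$ is again essential and must be invoked precisely. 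Once that local-global matching is pinned down, the rest is the reciprocity law of Theorem \ref{T5.4} applied term by term.
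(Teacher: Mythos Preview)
Your proposal is correct and follows essentially the same route as the paper's proof: decompose $K_g^+$ via Theorem~\ref{T3.10} into the constant-field piece (giving $n\mid\deg B$) and the Kummer pieces $k(\sqrt[e_j]{P_j})$, translate the splitting condition for $\pK$ into residue-symbol conditions, and then apply the reciprocity law Theorem~\ref{T5.4} to flip $\xbinom{P_j}{B}$ into $\xbinom{B}{P_j}$. The only cosmetic differences are that the paper works directly with the Artin symbol and its norm-functoriality $(\pK,K_g^+/K)|_M=(\N_{K/k}\pK,M/k)$ rather than localising at $K_{\pK}$, and it replaces $P_j$ by $P_j^*:=(-1)^{\deg P_j}P_j$ so that the sign factors in the reciprocity computation cancel cleanly---the delicate $e_j$-versus-$n$ matching you flag is exactly the passage the paper makes in its final line.
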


\begin{proof}
Let $d_j:=\deg P_j$ and $P_j^*:=(-1)^{d_j}P_j$, $1\leq j\leq r$.
We have that $\pK$ decomposes fully in $K_g^+/K$
if and only if the Artin symbol $(\pK,K_g^+/K)=1$. Since $K_g^+={\mathbb F}_{q^n}\big(\sqrt[e_1]{P_1},
\ldots, \sqrt[e_r]{P_r}\big)={\mathbb F}_{q^n}\big(\sqrt[e_1]{P_1^*},
\ldots, \sqrt[e_r]{P_r^*}\big)$, we have $(\pK,K_g^+/K)=1$ if and
only if $(\pK,K_g^+/K)|_{k\big(\sqrt[e_j]{P_j^*}\big)}=1
$ for all $1\leq j\leq r$, and $(\pK,K_g^+/K)|_{
{\mathbb F}_{q^n}(T)}=1$. This is equivalent to 
\begin{gather*}
(\N_{K/k}\pK,k\big(\sqrt[e_j]{P_j^*}\big)/k)=1 \iff 
\xbinom {P_j^*}{\N_{K/k}\pK}_{e_j}=1\quad\text{for all}\quad 1\leq j
\leq r,
\intertext{and}
(\N_{K/k}\pK,{\mathbb F}_{q^n}(T)/k)=1 \iff
\xbinom {\xi}{\N_{K/k}\pK}_n=1,
\end{gather*}
where $\xi$ is a generator of $\f$.

Let $h=\deg B$. Then, by the reciprocity law, 
\begin{align*}
\xbinom {P_j^*}{\N_{K/k}\pK}_{e_j}&=\xbinom {-1}{\N_{K/k}\pK}_{e_j}^{d_j}
\xbinom {P_j}{\N_{K/k}\pK}_{e_j}\\
&=(-1)^{((q^h-1)/e_j)d_j}(-1)^{hd_j(q-1)/e_j}
\xbinom {B}{\langle P_j\rangle}_{e_j}=\xbinom {B}{\langle P_j\rangle}_{e_j},
\end{align*}
for $1\leq j\leq r$. 

Therefore, $\pK$ decomposes fully in $K_g^+/K$ if and only
if $\big(P_j(T),k\big(\sqrt[n]{B}\big)/k\big)=1$ for $1\leq j\leq r$ and 
$\xi^{(q^h-1)/n}=1$. The last equality is equivalent to $n|h$
since the order of $\xi$ in $\f$ is $q-1$ and $q\equiv 1\bmod n$.
\end{proof}

\end{document}